\theoremstyle{plain}
\newtheorem{theorem}{Theorem}[section]
\newtheorem{lemma}[theorem]{Lemma}
\newtheorem{proposition}[theorem]{Proposition}
\newtheorem*{claim*}{Claim}
\newtheorem{example}{Example}
\theoremstyle{definition}
\newtheorem{definition}[theorem]{Definition}
\newtheorem{question}[theorem]{Question}
\theoremstyle{remark}
\newenvironment{proof of claim}{\paragraph{\textbf{Proof of claim}}}{\hfill$\triangle$}
\numberwithin{equation}{section}
\begin{document}

%%%

\title[$G_\delta$-refinements]{$G_\delta$-refinements}

\author{Robson A. Figueiredo}
\address{Instituto de Matem\'atica e Estat\'istica da Universidade de S\~ao Paulo\\
Rua do Mat\~ao, 1010, Cidade Universit\'aria, CEP 05508-090, S\~ao Paulo, SP, Brazil}
% \curraddr{Instituto de Matem\'atica e Estat\'istica da Universidade de S\~ao Paulo\\Rua do Mat\~ao, 1010, Cidade Universit\'aria, CEP 05508-090, S\~ao Paulo, SP, Brazil}
\email{robs@ime.usp.br}

% \subjclass[2010]{Primary 03C64. Secondary 14C30, 14F25, 14P15, 57A65, 58A12}

\keywords{}

\begin{abstract}
In this work we deal with the preservation by $G_\delta$-refinements. 
We prove that for $\mathrm{SP}$-scattered spaces the metacompactness, paralindel\"ofness, metalindel\"ofness and linear lindel\"ofness are preserved by $G_\delta$-refinements. 
In this context we also consider some other generalizations of discrete spaces like $\omega$-scattered and $N$-scattered. 
% We answer a question of Juh\'asz, Soukup, Szentmikl\'ossy and Weiss about the tightness of the $G_\delta$-refinement of a $\sigma$-product.
% We show that if $X$ is the one point lindel\"ofication of a discrete space of cardinality $\kappa$, then the $G_\delta$-refinement of $\sigma(X^\kappa)$ has tighness $\aleph_1$.
In the final part of this paper we look at a question of Juh\'asz, Soukup, Szentmikl\'ossy and Weiss concerning the tighness of the $G_\delta$-refinement of a $\sigma$-product. %$\sigma(X^\kappa)$, where $X$ is the one point lindel\"ofication of a discrete space of cardinality $\kappa$. 
%possibility of proper topological substructures being homeomorphic to the irrationals
\end{abstract}

\maketitle

%%%

%\section{Introduction}

\section{Preliminaries}

\begin{definition}
For any space $\langle X,\tau\rangle$, the topology $\tau_\delta$ obtained by letting every $G_\delta$ subset of $X$ be open is called the \textbf{$G_\delta$-topology} and the space so obtained is denoted by $X_\delta$.
\end{definition}

\begin{definition}
Let $X$ be a set and let $\mu$ be a cardinal such that $\mu\leq |X|$. 
We say that $\mathscr{C}\subseteq[X]^\mu$ is \textit{cofinal} in $[X]^\mu$ if, for all $x\in [X]^\mu$, there exists $y\in\mathscr{C}$ such that $x\subseteq y$.
For cardinals $\mu\leq\kappa$, we define $\mathrm{cf}\left([\kappa]^\mu,\subseteq\right)$ as the least cardinality of a cofinal family in $[\kappa]^\mu$.
Given a infinite cardinal $\kappa$, we define $\mathrm{Cov}_\omega(\kappa)=\mathrm{cf}\left([\kappa]^{\aleph_0},\subseteq\right)$.
\end{definition}

\begin{theorem}[Passos \cite{Pas2007}]\label{1}
Let $\kappa$ be a infinite cardinal such that $\mathrm{Cov}_\omega(\kappa)=\kappa$.
Given a set $X$ of cardinality $\kappa$, there exists an $\omega$-covering elementary submodel $M$ such that $X\subseteq M$ and $|M|=\kappa$.
% Let $\kappa$ be a cardinal.
% The following are equivalent:
% \begin{enumerate}
% \item There exists an $\omega$-covering elementary submodel of cardinality $\kappa$.
% \item $\mathrm{Cov}_\omega(\kappa)=\kappa$.
% \end{enumerate}
\end{theorem}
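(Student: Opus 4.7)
The plan is to construct $M$ as the union of a continuous increasing elementary chain $\langle M_\alpha : \alpha < \omega_1 \rangle$ of elementary submodels of $H(\theta)$ (for a sufficiently large regular $\theta$), each of cardinality $\kappa$, so that at every successor stage a cofinal family for $[M_\alpha]^{\aleph_0}$ of size $\kappa$ is captured inside $M_{\alpha+1}$. I will treat the interesting case $\kappa \geq \aleph_1$ and maintain throughout the invariant $\kappa \subseteq M_\alpha$ (the case $\kappa = \aleph_0$ is degenerate and can be handled separately).

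First, using $\mathrm{Cov}_\omega(\kappa) = \kappa$, fix a cofinal family $\mathscr{C} \subseteq [\kappa]^{\aleph_0}$ of size $\kappa$, and by L\"owenheim--Skolem choose $M_0 \prec H(\theta)$ of size $\kappa$ with $X \cup \kappa \subseteq M_0$. Given $M_\alpha$ with $|M_\alpha| = \kappa$ and $\kappa \subseteq M_\alpha$, push $\mathscr{C}$ forward along any bijection $\kappa \to M_\alpha$ to obtain a cofinal family $\mathscr{D}_\alpha \subseteq [M_\alpha]^{\aleph_0}$ of cardinality $\kappa$; then pick $M_{\alpha+1} \prec H(\theta)$ of size $\kappa$ with $M_\alpha \cup \{M_\alpha, \mathscr{D}_\alpha\} \subseteq M_{\alpha+1}$ and $\kappa \subseteq M_{\alpha+1}$. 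At limits, take unions. Finally, set $M = \bigcup_{\alpha < \omega_1} M_\alpha$; then $|M| = \kappa \cdot \omega_1 = \kappa$, $X \subseteq M$, and $M \prec H(\theta)$ by Tarski--Vaught.

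To verify the $\omega$-covering condition, let $A \in [M]^{\aleph_0}$. Since $\mathrm{cf}(\omega_1) = \omega_1$, the countable set $A$ is bounded in the chain, so $A \subseteq M_\alpha$ for some $\alpha < \omega_1$. By cofinality of $\mathscr{D}_\alpha$ there exists $B \in \mathscr{D}_\alpha$ with $A \subseteq B$, and it remains to check $B \in M$. Since $\mathscr{D}_\alpha \in M_{\alpha+1}$ and $|\mathscr{D}_\alpha| = \kappa$, elementarity yields a bijection $g \colon \kappa \to \mathscr{D}_\alpha$ with $g \in M_{\alpha+1}$; for each $\beta < \kappa$ we have $\beta \in \kappa \subseteq M_{\alpha+1}$, whence $g(\beta) \in M_{\alpha+1}$ by elementarity. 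Therefore $\mathscr{D}_\alpha \subseteq M_{\alpha+1} \subseteq M$, and in particular $B \in M$.

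The main obstacle is precisely this last step: the cofinal family $\mathscr{D}_\alpha$ sits in $M_{\alpha+1}$ as a single set, but we need each of its $\kappa$ members to lie in $M_{\alpha+1}$ as well. This is what forces the invariant $\kappa \subseteq M_\alpha$, without which one cannot unpack a size-$\kappa$ enumeration via elementarity. The use of chain length $\omega_1$ is chosen so that any countable subset of the final union is bounded below $\omega_1$, allowing a single $\mathscr{D}_\alpha$ to provide the covering set.
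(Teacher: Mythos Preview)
The paper does not give a proof of this theorem; it is quoted from Passos's thesis \cite{Pas2007} and used as a black box (in Lemma~\ref{161}). So there is no in-paper argument to compare against.

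That said, your proof is correct and is the standard elementary-chain construction for producing $\omega$-covering models. Two small remarks. First, the case $\kappa=\aleph_0$ is vacuous rather than merely degenerate: $[\omega]^{\aleph_0}$ has $\omega$ as a maximum, so $\mathrm{Cov}_\omega(\aleph_0)=1\neq\aleph_0$ and the hypothesis already forces $\kappa\geq\aleph_1$. Second, the detour through a bijection $g\colon\kappa\to\mathscr{D}_\alpha$ in $M_{\alpha+1}$ is fine but unnecessary: since $|\mathscr{D}_\alpha|=\kappa$, you may simply demand $\mathscr{D}_\alpha\subseteq M_{\alpha+1}$ directly when invoking L\"owenheim--Skolem at the successor step, which keeps the size of $M_{\alpha+1}$ at $\kappa$ and avoids the unpacking argument.
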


Recall that a subset $F$ of $X$ is $\kappa$-closed, where $\kappa$ is an infinite cardinal iff whenever $S\subseteq F$ and $|S|\leq\kappa$ then $S\subseteq F$.
It is well known that $t(X)\leq\kappa$ iff every $\kappa$-closed set in $X$ is closed.

\section{ \texorpdfstring{$\mathrm{SP}$}{P}-scattered spaces}

\begin{definition}
A point $p$ in a topological space $X$ is called a \textit{strong $P$-point} if it has a neighborhood consisting of $P$-points. 
The set of all strong $P$-points of $X$ is denoted by $\mathrm{SP}(X)$.
\end{definition}

Observe that $\mathrm{SP}(X) = \mathrm{int}_XP(X)$.

\begin{definition}[\cite{HRW2007}]
Recursively, define:
\begin{itemize}
\item $S_0(X) = X$ and $S_1(X) = X\setminus\mathrm{SP}(X)$;
\item $S_{\alpha+1}(X) = S_1(S_\alpha(X))$ for any ordinal $\alpha\geq 1$;
\item $S_\lambda(X) =\bigcap\{\,S_\alpha(X):\alpha<\lambda\,\}$, if $\lambda$ is a limit ordinal.
\end{itemize}
\end{definition}

In \cite{HRW2007}, Henriksen, Raphael and Woods proved the following generalizations of the well known theorems 5.1 and 5.2 of \cite{LR1981}, respectively:

\begin{theorem}%[\citet{HRW2007}]
If $X$ is a Lindel\"of $\mathrm{SP}$-scattered regular space, then $X_\delta$ is Lindel\"of.
\end{theorem}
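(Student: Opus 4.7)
I will argue by transfinite induction on the SP-scattering rank of $X$, i.e.\ the least ordinal $\alpha$ with $S_\alpha(X)=\emptyset$ (which exists by hypothesis). Before starting the induction I would record a small auxiliary lemma: for every subspace $Y\subseteq X$ and every ordinal $\beta$ one has $S_\beta(Y)\subseteq Y\cap S_\beta(X)$. The proof is a transfinite induction on $\beta$, the heart of which is the observation that a $P$-point of $X$ lying in $Y$ is still a $P$-point of $Y$, since if $\{O_n\}$ are $X$-open then $\bigcap_n(O_n\cap Y)=\bigl(\bigcap_n O_n\bigr)\cap Y$ and the $P$-point property in $X$ descends to $Y$. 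The consequence I will use is: if $Y\cap S_\gamma(X)=\emptyset$ then $Y$ is SP-scattered of rank at most $\gamma$.

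Now suppose the theorem holds for every Lindel\"of regular SP-scattered space of rank strictly less than $\alpha$, and let $\mathscr{U}$ be a $G_\delta$-open cover of $X$. For each $x\in X$ let $\gamma_x<\alpha$ be the unique ordinal such that $x\in\mathrm{SP}(S_{\gamma_x}(X))$; this exists because the first stage at which a point leaves the chain $S_0(X)\supseteq S_1(X)\supseteq\cdots$ is always a successor. Pick $U_x\in\mathscr{U}$ with $x\in U_x$. Since $x$ is a $P$-point of $S_{\gamma_x}(X)$ and $U_x\cap S_{\gamma_x}(X)$ is a $G_\delta$ of $S_{\gamma_x}(X)$ containing $x$, it contains a relative open neighborhood, i.e.\ there is an $X$-open $W_x\ni x$ with $W_x\cap S_{\gamma_x}(X)\subseteq U_x$. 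Using regularity of $X$, shrink $W_x$ to an $X$-open $W_x'$ with $x\in W_x'\subseteq\overline{W_x'}\subseteq W_x$, so that also $\overline{W_x'}\cap S_{\gamma_x}(X)\subseteq U_x$.

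The family $\{W_x':x\in X\}$ is an open cover of $X$ in the \emph{original} topology; by Lindel\"ofness of $X$ it has a countable subcover $\{W_{x_n}':n\in\omega\}$. Set $T_n:=\overline{W_{x_n}'}\setminus U_{x_n}$. Since $\overline{W_{x_n}'}$ is closed in $X$ and $X\setminus U_{x_n}$ is $F_\sigma$, each $T_n$ is $F_\sigma$ in $X$, hence Lindel\"of and regular. By construction $T_n\cap S_{\gamma_{x_n}}(X)=\emptyset$, so the subspace lemma gives that $T_n$ is SP-scattered of rank at most $\gamma_{x_n}<\alpha$. Apply the induction hypothesis to $T_n$ (the restriction of $\mathscr{U}$ is a $G_\delta$-cover of $T_n$, and the subspace $G_\delta$-topology on $T_n$ coincides with the subspace topology on $T_n$ inside $X_\delta$) to extract a countable $\mathscr{U}_n\subseteq\mathscr{U}$ covering $T_n$. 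Then $\{U_{x_n}:n\in\omega\}\cup\bigcup_n\mathscr{U}_n$ is a countable subfamily of $\mathscr{U}$ covering $X$, because $X=\bigcup_n W_{x_n}'\subseteq\bigcup_n\overline{W_{x_n}'}\subseteq\bigcup_n\bigl(U_{x_n}\cup T_n\bigr)$, closing the induction.

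The delicate point is the construction of $W_x'$: it simultaneously uses the $P$-point property (to turn the $G_\delta$-set $U_x\cap S_{\gamma_x}(X)$ into a genuine relative open neighborhood of $x$ in $S_{\gamma_x}(X)$) and regularity (to pass to a smaller open set whose \emph{closure} still satisfies the same containment in $U_x$). Without this closure control the remainder $T_n$ would fail to be $F_\sigma$, and I would lose both its Lindel\"ofness and, via the subspace lemma, the strict drop in SP-rank that keeps the induction going.
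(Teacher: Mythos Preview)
Your argument is correct. The paper, however, does not give its own proof of this statement: it is quoted from Henriksen--Raphael--Woods \cite{HRW2007}, so there is no in-paper proof to compare against directly. That said, your approach is genuinely different from the technique the paper deploys for the analogous results it \emph{does} prove (the paralindel\"of, metalindel\"of, metacompact, and linearly Lindel\"of cases). You run a transfinite induction on the SP-scattering rank, coupled with the monotonicity lemma $S_\beta(Y)\subseteq Y\cap S_\beta(X)$ that guarantees each leftover piece $T_n$ drops strictly in rank. The paper instead avoids rank altogether: it fixes the cover, defines the set $O$ of points admitting a suitable local partial refinement, shows directly that $O=X$ suffices, and then proves $O=X$ by applying Proposition~\ref{HRW2007:2.8} to $X\setminus O$ to locate a relative $P$-neighbourhood and reach a contradiction. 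Your inductive route makes explicit use of the stratification $\langle S_\alpha(X)\rangle_\alpha$ and of the Lindel\"ofness of $F_\sigma$ subspaces; the paper's ``good-set'' method is rank-free and is what lets it treat several covering properties with a single template. Both arguments exploit regularity in exactly the same way---to pass from an open neighbourhood to its closure so that the remainder after removing a $G_\delta$ set is $F_\sigma$ and hence inherits Lindel\"ofness.
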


\begin{theorem}\label{HRW2007:4.2}%[\citet{HRW2007}]
If $X$ is a paracompact $\mathrm{SP}$-scattered Hausdorff space, then $X_\delta$ is paracompact.
\end{theorem}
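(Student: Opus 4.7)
The plan is transfinite induction on the SP-height of $X$, namely the least ordinal $\gamma$ with $S_\gamma(X)=\emptyset$. In the base case $\gamma\leq 1$, every point of $X$ is a strong $P$-point, so $X$ is itself a $P$-space; then every $G_\delta$-set of $X$ is already $X$-open, $X_\delta=X$, and paracompactness is inherited trivially.

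For the inductive step, assume the result for all smaller heights and let $X$ have height $\gamma>1$. Set $U:=\mathrm{SP}(X)$ and $Y:=S_1(X)=X\setminus U$. Then $U$ is $X$-open and consists entirely of $P$-points, while $Y$ is $X$-closed and SP-scattered of strictly smaller height. As a closed subspace of a paracompact Hausdorff space, $Y$ is paracompact Hausdorff; and because $Y$ is closed in $X$, the topology that $Y$ inherits from $X_\delta$ coincides with $Y_\delta$ (every open set of $Y_\delta$ is the trace of a $G_\delta$-set of $X$). By induction, $Y_\delta$ is paracompact.

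Fix an open cover $\mathcal U$ of $X_\delta$. Two partial refinements are natural. First, the trace of $\mathcal U$ on $Y$ is an open cover of $Y_\delta$, which by the induction hypothesis admits a locally finite open refinement $\mathcal V$; using closedness of $Y$ in $X$, each $V\in\mathcal V$ is the restriction to $Y$ of some $G_\delta$-set $\tilde V$ of $X$, which may be shrunk so that $\tilde V\subseteq U_V$ for some $U_V\in\mathcal U$. Second, for each $p\in U$ the $P$-point property allows any $G_\delta$-neighborhood of $p$ to be replaced by an $X$-open one, so one picks $X$-open $W_p$ with $p\in W_p\subseteq U\cap U_p$ for some $U_p\in\mathcal U$; this gives an $X$-open cover $\{W_p\}_{p\in U}$ of $U$ that already refines $\mathcal U$.

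The principal obstacle is merging these two refinements into a single locally finite (in $X_\delta$) open refinement of $\mathcal U$. The approach is to apply paracompactness of $X$ to a suitable $X$-open cover extending $\{W_p\}$, extracting an $X$-locally-finite open refinement lying inside $U$; then to shrink each $\tilde V$ by intersecting with a carefully chosen $X_\delta$-neighborhood of $Y$ so that the accumulation of the $\tilde V$'s toward $Y$ is controlled by the $Y_\delta$-local finiteness of $\mathcal V$, while the $X$-local finiteness of the refined $W$-family handles points of $U$. The combinatorial heart of the proof is to ensure that at each $y\in Y$ only finitely many $\tilde V$'s and only finitely many members of the refined $W$-family can meet a common $X_\delta$-neighborhood of $y$; matching the shrinkings of the $\tilde V$'s to the locally finite cover of $U$, while using normality of $X$ to separate what needs to be separated, is where the bulk of the work lies. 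The inductive framework and the two partial refinements above are then, by comparison, routine.
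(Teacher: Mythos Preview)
The paper does not supply its own proof of this theorem; it is quoted as a result of Henriksen, Raphael and Woods \cite{HRW2007}. Nevertheless, your proposal has a genuine gap in the induction. You reduce $X$ to $Y=S_1(X)$ and claim $Y$ has strictly smaller SP-height. But $S_\alpha(Y)=S_{1+\alpha}(X)$ for every ordinal $\alpha$, so the height of $Y$ is the least $\beta$ with $1+\beta\ge\gamma$; for every infinite $\gamma$ this $\beta$ equals $\gamma$ itself (since $1+\gamma=\gamma$ whenever $\gamma\ge\omega$). Thus peeling off $\mathrm{SP}(X)$ from below never lowers the height once $\gamma\ge\omega$, and at the inductive step you are invoking the hypothesis for a space of the \emph{same} height, which is circular.

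Independently of this, the proposal is avowedly incomplete: you locate the ``combinatorial heart'' --- merging the $Y_\delta$-locally-finite family $\{\tilde V\}$ with an $X$-locally-finite family refining $\{W_p\}$ into a single $X_\delta$-locally-finite open refinement of $\mathcal U$ --- but do not carry it out, and it is not routine (local finiteness of the $\tilde V$'s is only controlled along $Y$, not at points of $U$). For comparison, the paper's own proofs of the parallel preservation theorems (paralindel\"of, metalindel\"of, metacompact, linearly Lindel\"of) avoid induction on height entirely: one fixes the cover $\mathscr C$, lets $O$ be the $\tau$-open set of ``good'' points, and applies Proposition~\ref{HRW2007:2.8} to the closed set $X\setminus O$ to force $O=X$. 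That template sidesteps both difficulties above and would be a more promising route here as well.
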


In the same article they asked: 

\begin{question}
If $X$ is a metacompact  $\mathrm{SP}$-scattered regular space, so is $X_\delta$ a metacompact space?
\end{question}

In this section, we will see that not just the metacompactness, but also the paralindel\"ofness, the metalindel\"ofness and the linear lindel\"ofness are preserved by $G_\delta$-refinements on the class of $\mathrm{SP}$-scattered regular spaces.

\begin{theorem}[\cite{Mis1972}]\label{142}
A regular $P$-space $X$ is paralindel\"of if, and only if, it is paracompact.
\end{theorem}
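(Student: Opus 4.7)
The direction that paracompactness implies paralindel\"ofness is trivial, since every locally finite open refinement is automatically locally countable. I would focus on the converse: assuming $X$ is a regular paralindel\"of $P$-space and $\mathcal{U}$ an arbitrary open cover, the goal is to produce a locally finite open refinement of $\mathcal{U}$.

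First, apply paralindel\"ofness to $\mathcal{U}$ to obtain a locally countable open refinement $\mathcal{V} = \{V_\alpha : \alpha < \kappa\}$. For each $x \in X$, the subfamily $\mathcal{V}_x := \{V \in \mathcal{V} : x \in V\}$ is countable (since ``locally countable'' implies ``point-countable''), so the $P$-space property ensures that $W(x) := \bigcap \mathcal{V}_x$ is open and contains $x$. The cover $\{W(x) : x \in X\}$ then refines $\mathcal{V}$, hence $\mathcal{U}$. Moreover, if $U$ is a locally countable neighborhood of $x$ meeting $V_{\alpha_1}, V_{\alpha_2}, \ldots$, then by regularity each index $\alpha_n$ with $x \notin \overline{V_{\alpha_n}}$ provides an open neighborhood $X \setminus \overline{V_{\alpha_n}}$ of $x$; intersecting countably many such sets with $W(x) \cap U$ still yields an open set by the $P$-space property, so $W(x)$ can be shrunk to avoid all such $V_{\alpha_n}$ at once.

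To finish, I would either iterate paralindel\"ofness on $\{W(x) : x \in X\}$ and combine with the shrinking trick above, or aim directly for the stronger criterion that a regular space is paracompact iff every open cover admits a $\sigma$-discrete open refinement (Stone/Michael). The \textbf{main obstacle} is the passage from locally countable to locally finite: the above countable-intersection shrinking eliminates only those $V_\alpha$ with $x \notin \overline{V_\alpha}$, while $x$ may \emph{a priori} lie in the closure of countably many members of $\mathcal{V}$. Overcoming this requires exploiting that in a $P$-space countable unions of closed sets are closed, together with a transfinite bookkeeping on a well-ordering of $\mathcal{V}$, to decompose the locally countable family $\mathcal{V}$ into countably many discrete subfamilies. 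This is the delicate part of the proof.
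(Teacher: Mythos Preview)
The paper does not contain a proof of this theorem at all: it is quoted from \cite{Mis1972} and used as a black box in the proof of Theorem~\ref{2}. So there is no ``paper's own proof'' to compare against.

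As for your proposal itself, it is not yet a proof but a plan, and you say so yourself: you correctly isolate the only nontrivial step --- upgrading a locally countable open refinement to a locally finite (or $\sigma$-discrete) one --- and then stop, calling it ``the delicate part of the proof.'' The shrinking you describe via $W(x)=\bigcap\mathcal V_x$ and the removal of the $\overline{V_{\alpha_n}}$ with $x\notin\overline{V_{\alpha_n}}$ is fine as far as it goes, but, as you note, it does nothing about the countably many members of $\mathcal V$ whose closures contain $x$; and the final sentence (``transfinite bookkeeping \dots\ to decompose the locally countable family $\mathcal V$ into countably many discrete subfamilies'') is an aspiration, not an argument. Until that decomposition is actually produced, the converse direction is not established. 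If you want to complete it, one clean route is to show that in a regular $P$-space every point-countable open cover has a $\sigma$-disjoint open refinement (using that countable intersections of open sets are open to carve out, for each $x$, a neighbourhood contained in exactly the members of $\mathcal V$ through $x$, and then stratifying by the order type of the set of indices through $x$ under a fixed well-ordering of $\mathcal V$); Michael's criterion then finishes the job.
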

% \begin{proof}
% Veja \cite{Mis1972}, p\'agina 355.%teorema 4.3 de 
% \end{proof}

\begin{proposition}[\cite{HRW2007}]\label{HRW2007:2.8}
If $X$ is a regular space, then the following are equivalent:
\begin{enumerate}
\item $X$ is $\mathrm{SP}$-scattered;
\item if $A\subseteq X$ is nonempty, then $\mathrm{int}_A\{\,a:\text{$a$ is a $P$-point of $A$}\,\}\ne\emptyset$.
\end{enumerate}
\end{proposition}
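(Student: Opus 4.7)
My plan is to prove both implications by tracking a subset $A$ through the decreasing transfinite filtration $(S_\alpha(X))_\alpha$.

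For $(1)\Rightarrow(2)$, let $A\subseteq X$ be nonempty. Since $X$ is $\mathrm{SP}$-scattered, some $S_\gamma(X)$ is empty, so the set of ordinals $\alpha$ with $A\not\subseteq S_\alpha(X)$ is nonempty; let $\alpha$ be its least element. Clearly $\alpha>0$, because $A\subseteq S_0(X)=X$. I would then rule out the limit case: if $\alpha$ were a limit ordinal, then $A\subseteq S_\beta(X)$ for every $\beta<\alpha$ would force $A\subseteq\bigcap_{\beta<\alpha}S_\beta(X)=S_\alpha(X)$. So $\alpha=\beta+1$, hence $A\subseteq S_\beta(X)$ and there exists a point $a\in A\setminus S_{\beta+1}(X)=A\cap\mathrm{SP}(S_\beta(X))$. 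By definition of $\mathrm{SP}$ we pick an open $U$ in $S_\beta(X)$ with $a\in U\subseteq P(S_\beta(X))$, and $V:=U\cap A$ is a relative neighborhood of $a$ in $A$.

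The main step — really the only place where a small argument is needed — is to confirm that every $y\in V$ is a $P$-point of $A$. This is a standard subspace-stability observation: any $G_\delta$ in $A$ is the trace on $A$ of a $G_\delta$ in $S_\beta(X)$, and because $y$ is a $P$-point of $S_\beta(X)$, that $G_\delta$ contains a relatively open neighborhood of $y$ in $S_\beta(X)$, whose trace on $A$ is the required neighborhood of $y$ in $A$. Hence $V$ is contained in the set of $P$-points of $A$ and witnesses that $\mathrm{int}_A\{p\in A:p\text{ is a }P\text{-point of }A\}\ne\emptyset$.

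For $(2)\Rightarrow(1)$, I would argue by contrapositive. If $X$ is not $\mathrm{SP}$-scattered, then the sequence $(S_\alpha(X))_\alpha$ cannot strictly decrease at every successor step (else it would inject the ordinal class into $\mathcal{P}(X)$), so there exists $\alpha$ with $S_{\alpha+1}(X)=S_\alpha(X)\ne\emptyset$; by construction this means $\mathrm{SP}(S_\alpha(X))=\emptyset$, and the nonempty set $A:=S_\alpha(X)$ contradicts (2). I do not foresee a serious obstacle: the limit-ordinal exclusion and the subspace-stability of $P$-points are both short bookkeeping. I also note that regularity of $X$ does not seem to enter either direction and is probably assumed only to align with the ambient setting of the section.
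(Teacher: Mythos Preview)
The paper does not supply its own proof of this proposition: it is quoted from \cite{HRW2007} and used as a black box in the subsequent arguments. So there is nothing in the paper to compare your proposal against line by line.

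That said, your argument is correct and is essentially the canonical one. The forward direction is exactly the standard scattered-style minimality argument applied to the filtration $(S_\alpha(X))_\alpha$, and your ``subspace-stability'' step is the right observation: if $A\subseteq S_\beta(X)$ and $y\in A$ is a $P$-point of $S_\beta(X)$, then any $G_\delta$ of $A$ through $y$ extends to a $G_\delta$ of $S_\beta(X)$ through $y$, which is a neighborhood there, and restricting back to $A$ gives the desired neighborhood in $A$. The converse is also fine: the transfinite sequence must stabilize for cardinality reasons, and the stable nonempty value $A=S_\alpha(X)$ satisfies $\mathrm{SP}(A)=\emptyset$, i.e.\ $\mathrm{int}_A P(A)=\emptyset$, contradicting (2). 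Your remark that regularity is not actually used in either implication is accurate; the hypothesis is carried only because it is the ambient assumption throughout the section and in \cite{HRW2007}.
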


\begin{theorem}\label{2}
If $X$ is a regular $\mathrm{SP}$-scattered paralindel\"of space, then $X_\delta$ is paracompact. 
\end{theorem}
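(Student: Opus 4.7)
The plan is to apply Theorem \ref{142} to the regular $P$-space $X_\delta$. That $X_\delta$ is a $P$-space is immediate: if $\{U_n\}_n$ are $\tau_\delta$-open and $x\in\bigcap_n U_n$, choose $G_\delta$-sets $G_n$ of $X$ with $x\in G_n\subseteq U_n$; then $\bigcap_n G_n$ is a $G_\delta$-set, hence a $\tau_\delta$-neighborhood of $x$ inside $\bigcap_n U_n$. Regularity of $X_\delta$ follows from that of $X$: given a $\tau_\delta$-closed set $F\not\ni p$, write $X\setminus F\supseteq\bigcap_n O_n\ni p$ with $O_n\in\tau$, use $\tau$-regularity to pick $V_n\in\tau$ with $p\in V_n\subseteq\overline{V_n}^{\tau}\subseteq O_n$, and observe that $V:=\bigcap_n V_n$ is $\tau_\delta$-open with $\overline{V}^{\tau_\delta}\subseteq\bigcap_n\overline{V_n}^{\tau}\subseteq X\setminus F$. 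Theorem \ref{142} then reduces the statement to showing that $X_\delta$ is paralindel\"of.

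Fix a $\tau_\delta$-open cover $\mathscr U$ of $X$, which we may refine to consist of $G_\delta$-sets of $X$. For each $x\in X$ let $\alpha(x)$ be the unique ordinal with $x\in T_{\alpha(x)}:=\mathrm{SP}(S_{\alpha(x)}(X))$. By Proposition \ref{HRW2007:2.8}, $x$ has a $\tau$-open neighborhood $W_x$ with $W_x\cap S_{\alpha(x)}(X)$ contained in the $P$-points of $S_{\alpha(x)}(X)$. Fix $U_x\in\mathscr U$ with $x\in U_x$; since $U_x\cap S_{\alpha(x)}(X)$ is a $G_\delta$ in $S_{\alpha(x)}(X)$ containing the $P$-point $x$ of that subspace, shrinking inside $W_x$ gives a $\tau$-open $V_x$ with $x\in V_x\subseteq W_x$ and $V_x\cap S_{\alpha(x)}(X)\subseteq U_x$. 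Applying paralindel\"ofness of $X$ to $\{V_x:x\in X\}$ yields a $\tau$-locally countable $\tau$-open refinement $\mathscr R$; for each $R\in\mathscr R$ pick a witness $x_R$ with $R\subseteq V_{x_R}$ and set $R^*:=R\cap U_{x_R}$, a $\tau_\delta$-open subset of $U_{x_R}\in\mathscr U$.

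To complete the argument I would recurse on the rank $\alpha$: assuming a $\tau_\delta$-locally countable refinement covers $X\setminus S_\alpha(X)$, add the pieces $R^*$ coming from witnesses $x_R\in T_\alpha$, which cover $T_\alpha$ because if $R\ni x\in T_\alpha$ then $R\cap S_\alpha(X)\subseteq V_{x_R}\cap S_\alpha(X)\subseteq U_{x_R}$, so $R^*\ni x$. The main obstacle is to preserve $\tau_\delta$-local countability across strata, especially at limit ordinals $\lambda$, where contributions from infinitely many $\beta<\lambda$ might accumulate at a point of $S_\lambda(X)$. My intended remedy exploits the fact that each $S_\alpha(X)$ is $\tau$-closed in $X$ (and hence $\tau_\delta$-closed): one replaces $V_x$ by $V_x\cap(X\setminus S_{\alpha(x)+1}(X))$, so that no piece from stage $\beta$ meets $S_{\beta+1}(X)\supseteq S_\lambda(X)$; a point of $S_\lambda(X)$ is then met only by stage-$\lambda$ and later pieces, and regularity of $X_\delta$ produces a $\tau_\delta$-neighborhood of it witnessing local countability of the combined family.
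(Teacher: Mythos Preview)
Your reduction to paralindel\"ofness of $X_\delta$ via Theorem~\ref{142} is correct, as is the construction of the $\tau$-open sets $V_x$ with $V_x\cap S_{\alpha(x)}(X)\subseteq U_x$. The gap is in coverage. You assert that the pieces $R^*=R\cap U_{x_R}$ with witness $x_R\in T_\alpha$ cover $T_\alpha$, arguing ``if $R\ni x\in T_\alpha$ then $V_{x_R}\cap S_\alpha(X)\subseteq U_{x_R}$, so $R^*\ni x$''. But the inclusion $V_{x_R}\cap S_\alpha(X)\subseteq U_{x_R}$ was established only for $\alpha=\alpha(x_R)$. Even after your shrinking $V_x\mapsto V_x\setminus S_{\alpha(x)+1}(X)$, all that follows from $R\ni x\in T_\alpha$ is $\alpha(x_R)\ge\alpha$; when $\alpha(x_R)>\alpha$ the point $x$ lies outside $S_{\alpha(x_R)}(X)$, so nothing forces $x\in U_{x_R}$, and nothing guarantees that \emph{some} $R\in\mathscr R$ containing $x$ has a witness in $T_\alpha$. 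Concretely, a piece $R$ may straddle several strata, and $R^*=R\cap U_{x_R}$ retains only the part compatible with the stratum of $x_R$, possibly discarding points of $R$ in lower strata that no other $R^*$ picks up.

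Note also that the limit-ordinal difficulty you flag is not actually the obstruction for this construction: every $R^*$ lies inside some $R\in\mathscr{R}$, and $\mathscr{R}$ is already $\tau$-locally countable, so the whole family $\{R^*:R\in\mathscr{R}\}$ is automatically $\tau_\delta$-locally countable regardless of how many strata contribute. The genuine problem is that a single paralindel\"of refinement of $\{V_x\}$ cannot deliver coverage. The paper avoids the stratification entirely: it lets $O$ be the set of points lying in $\mathrm{int}_\tau\bigcup\mathscr{C}'$ for some locally countable open partial refinement $\mathscr{C}'$ of the given cover in $X_\delta$, and shows $O=X$ by contradiction. If $y\in X\setminus O$, Proposition~\ref{HRW2007:2.8} applied to $X\setminus O$ gives a $\tau$-neighborhood on which $X\setminus O$ is a $P$-subspace; one then writes a chosen $C_y\ni y$ from the cover as $\bigcap_n U_n$, finds $U_y$ with $\mathrm{cl}_\tau(U_y)\subseteq C_y\cup O$, and covers each closed shell $\mathrm{cl}_\tau(U_y)\setminus U_n\subseteq O$ separately by a locally countable family. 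The countable union of these together with $\{C_y\}$ is still locally countable (since $X_\delta$ is a $P$-space) and has $U_y$ in its $\tau$-interior, contradicting $y\notin O$. This shell argument is precisely what supplies coverage without sacrificing local countability, and it has no analogue in your recursion.
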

\begin{proof}
By the theorem \ref{142}, it is enough to show that $X_\delta$ is paralindel\"of.
Let $\mathscr{C}$ be an open cover of $X_\delta$.
Let $O$ be the set of all points $x\in X$ such that $x\in \mathrm{int}_\tau\bigcup\mathscr{C}'$ for some locally countable open partial refinement $\mathscr{C}'$ of $\mathscr{C}$ in $X_\delta$.

If $O=X$ then, for each $x\in X$, there exists a locally countable open partial refinement $\mathscr{C}_x$ of $\mathscr{C}$ in $X_\delta$ such that $x\in V_x=\mathrm{int}_\tau\bigcup\mathscr{C}_x$.
Since $\langle X,\tau\rangle$ is paralindel\"of, the open cover $\mathscr{V}=\{\,V_x:x\in X\,\}$ admits a locally countable open refinement $\{\,W_s:s\in S\,\}$, with $W_s\ne W_{s'}$ whenever $s\ne s'$.
For each $s\in S$, take $x_s\in X$ such that $W_s\subseteq V_{x_s}$.
So 
\[
\{\,W_s\cap C:s\in S\text{ and }C\in\mathscr{C}_{x_s}\,\}
\]
is locally countable open refinement of $\mathscr{C}$ in $X_\delta$.

Now it remains to show that in fact $O=X$.
Suppose this is not the case.
% Suppose that $O\ne X$.
Since $\langle X,\tau\rangle$ is $\mathrm{SP}$-scattered, by the proposition \ref{HRW2007:2.8}, there exists a $y\in X\setminus O$ and an open neighborhood $U$ of $y$ in $X$ such that $(X\setminus O)\cap U$ is a $P$-subspace of $X$.
Take $C_y\in\mathscr{C}$ such that $y\in C_y$.
We can suppose that $C_y=\bigcap\{\,U_n:n\in\omega\,\}$, where, for each $n\in\omega$, $U_n\in \tau$ and $\mathrm{cl}_\tau(U_n)\subseteq U$.
So $C_y\cup O$ is an open subset of $X$, for $(X\setminus O)\cap C_y$ is a subspace of the $P$-space $U$.
As $X$ is regular, $y$ has an open neighborhood $U_y$ in $X$ such that $\mathrm{cl}_{\tau}(U_y)\subseteq C_y\cup O$.

Fix $n\in\omega$.
Let $F=\mathrm{cl}_\tau(U_y)\setminus U_n$.
Since $F\subseteq O$, for each $x\in F$, there exists a locally countable open partial refinement $\mathscr{C}_x$ of $\mathscr{C}$ in $X_\delta$ such that $x\in V_x=\mathrm{int}_\tau\bigcup\mathscr{C}_x$.
As $\langle X,\tau\rangle$ is paralindel\"of and $F$ is closed, $\mathscr{V}=\{\,V_x:x\in F\,\}$ admits a locally countable open partial refinement $\mathscr{W}=\{\,W_s:s\in S\,\}$ that covers $F$, where $W_s\ne W_{s'}$ whenever $s\ne s'$.
For each $s\in S$, choose $x_s\in F$ such that $W_s\subseteq V_{x_s}$.
Consider the family
\[
\mathscr{D}_n=\{\,W_s\cap C:s\in S\text{ e }C\in\mathscr{C}_{x_s}\,\}.
\]

\textit{The family $\mathscr{D}_n$ is a locally countable open cover of $F$ in  $X_\delta$.}
Indeed, let $x\in X$.
Since $\mathscr{W}$ is a locally countable open family in $X$, there exists an open neighborhood $Z_x$ of $x$ in $X$ such that $T=\{\,s\in S:W_s\cap Z_x\ne\emptyset\,\}$ is countable.
For each $t\in T$, take an open neighborhood $O_t$ of $x$ in $X_\delta$ such that $\{\,C\in\mathscr{C}_{x_t}:C\cap O_t\ne\emptyset\,\}$ is countable.
Consider the following open neighborhood of $x$ em $X_\delta$:
\[
Z=Z_x\cap\bigcap\{\,O_t:t\in T\,\}.
\]
Note that for each $t\in T$, $\mathscr{C}_{x_t}(Z)=\{\,C\in\mathscr{C}_{x_t}:C\cap Z\ne\emptyset\,\}$ is countable.
Seeing that
\begin{align*}
\mathscr{D}_n({Z})&=\{\,D\in\mathscr{D}_n:D\cap Z\ne\emptyset\,\}\\
            &=\{\,W_i\cap C: s\in S,\,C\in\mathscr{C}_{x_s}\text{ e }W_s\cap C\cap Z\ne\emptyset\,\}\\
             &\subseteq\{\,W_t\cap C: t\in T\text{ e }C\in\mathscr{C}_{x_t}({Z})\,\},
\end{align*}
the family $\mathscr{D}_n({Z})$ is countable.

Thus,
\[
\mathscr{C}'=\{C_y\}\cup\bigcup\{\,\mathscr{D}_n:n\in\omega\,\}
\]
is a locally countable open partial refinement of $\mathscr{C}$ in $X_\delta$ such that $y\in U_y\subseteq\mathrm{int}_\tau\bigcup\mathscr{C}'$, contradicting the fact that $y\notin O$.
\end{proof}

\begin{theorem}
If $X$ is a regular $\mathrm{SP}$-scattered metalindel\"of space, then $X_\delta$ is metalindel\"of. 
\end{theorem}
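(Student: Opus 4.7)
The plan is to mimic the proof of Theorem~\ref{2}, systematically replacing ``locally countable'' by ``point-countable'' and ``paralindel\"of'' by ``metalindel\"of'' throughout. The reason this substitution is reasonable is that the class of point-countable families shares with the class of locally countable families the two stability properties used in the earlier argument: a countable union of point-countable families is point-countable, and if $\mathscr{W}$ is point-countable in $X$ and each $\mathscr{C}_{x_s}$ is point-countable in $X_\delta$, then $\{W_s\cap C:s\in S,\,C\in\mathscr{C}_{x_s}\}$ is point-countable in $X_\delta$.

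First I would define $O$ as the set of $x\in X$ such that $x\in\mathrm{int}_\tau\bigcup\mathscr{C}'$ for some point-countable open partial refinement $\mathscr{C}'$ of $\mathscr{C}$ in $X_\delta$. If $O=X$, then pick $\mathscr{C}_x$ and $V_x=\mathrm{int}_\tau\bigcup\mathscr{C}_x$ at each point, apply metalindel\"ofness of $\langle X,\tau\rangle$ to $\{V_x:x\in X\}$ to obtain a point-countable open refinement $\{W_s:s\in S\}$, choose $x_s$ with $W_s\subseteq V_{x_s}$, and verify that $\{W_s\cap C:s\in S,\,C\in\mathscr{C}_{x_s}\}$ is a point-countable open refinement of $\mathscr{C}$ in $X_\delta$.

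The main step is to show $O=X$. Assuming $O\ne X$, I would use Proposition~\ref{HRW2007:2.8} to extract $y\in X\setminus O$ and an open neighborhood $U$ of $y$ with $(X\setminus O)\cap U$ a $P$-subspace. Fix $C_y\in\mathscr{C}$ with $y\in C_y=\bigcap_{n\in\omega}U_n$ where $\mathrm{cl}_\tau(U_n)\subseteq U$, so that $C_y\cup O$ is open, and pick $U_y$ with $\mathrm{cl}_\tau(U_y)\subseteq C_y\cup O$ by regularity. For each $n\in\omega$, consider the closed set $F=\mathrm{cl}_\tau(U_y)\setminus U_n\subseteq O$; at each $x\in F$ pick a point-countable open partial refinement $\mathscr{C}_x$ with $x\in V_x=\mathrm{int}_\tau\bigcup\mathscr{C}_x$, then apply metalindel\"ofness to the open cover of $X$ consisting of $\{V_x:x\in F\}\cup\{X\setminus F\}$ to extract a point-countable open partial refinement $\mathscr{W}=\{W_s:s\in S\}$ covering $F$; choose $x_s\in F$ with $W_s\subseteq V_{x_s}$ and set $\mathscr{D}_n=\{W_s\cap C:s\in S,\,C\in\mathscr{C}_{x_s}\}$.

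The key verification is that $\mathscr{D}_n$ is point-countable in $X_\delta$: for any $x\in X$, the set $T=\{s\in S:x\in W_s\}$ is countable because $\mathscr{W}$ is point-countable in $X$, and for each $t\in T$ the set $\{C\in\mathscr{C}_{x_t}:x\in C\}$ is countable, so $\{D\in\mathscr{D}_n:x\in D\}$ is a countable union of countable sets. Then
\[
\mathscr{C}'=\{C_y\}\cup\bigcup\{\,\mathscr{D}_n:n\in\omega\,\}
\]
is a point-countable open partial refinement of $\mathscr{C}$ in $X_\delta$, and since $U_y\setminus C_y\subseteq\bigcup_n(\mathrm{cl}_\tau(U_y)\setminus U_n)$, we get $y\in U_y\subseteq\mathrm{int}_\tau\bigcup\mathscr{C}'$, contradicting $y\notin O$. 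I do not anticipate a substantial obstacle here: the argument runs entirely parallel to the paralindel\"of case, with the only new ingredient being the elementary fact that a countable union of point-countable families remains point-countable.
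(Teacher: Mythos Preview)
Your proposal is correct and follows essentially the same route as the paper's own proof: the paper likewise defines $O$ via point-countable partial refinements, reduces to the case $O=X$ by the same argument on the closed sets $F_n=\mathrm{cl}_\tau(U_y)\setminus U_n$, and assembles $\mathscr{C}'=\{C_y\}\cup\bigcup_n\mathscr{D}_n$ for the contradiction. The only cosmetic differences are that you make explicit the device of adjoining $X\setminus F$ before invoking metalindel\"ofness and spell out the point-countability check for $\mathscr{D}_n$, both of which the paper leaves implicit.
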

\begin{proof}
Let $\mathscr{C}$ be an open cover of $X_\delta$ and consider the set $O$ whose elements are all those $x\in X$ such that $x\in\mathrm{int}_\tau\bigcup\mathscr{C}'$ for some pointwise countable open partial refinement $\mathscr{C}'$ of $\mathscr{C}$ in $X_\delta$.

If $O=X$ then, proceeding in the same way as in the proof of theorem \ref{2}, we can obtain a pointwise countable open refinement of $\mathscr{C}$ in $X_\delta$.

In order to complete the proof, it is enough to show that $O=X$.
Suppose on the contrary that $O\ne X$.
As $X$ is regular and $\mathrm{SP}$-scattered, by the proposition \ref{HRW2007:2.8}, there exist a point $y\in X\setminus O$ and an open neighborhood $U$ of $y$ in $X$ such that $U\cap (X\setminus O)$ is a $P$-subspace of $X$.
Choose a $C_y\in\mathscr{C}$ such that $y\in C_y$.
We can suppose that $C_y=\bigcap\{\,U_n:n\in\omega\,\}$, where for each $n\in\omega$, $U_n\in\tau$ and $\mathrm{cl}_\tau(U_n)\subseteq U$.
Note that $C_y\cup O$ is an open subset of $X$.
Hence, from the regularity of $X$ it follows that there exist an open neighborhood $U_y$ of $y$ in $X$ such that $\mathrm{cl}_\tau(U_y)\subseteq C_y\cup O$. 

Fix $n\in\omega$.
Note that $F=\mathrm{cl}_\tau(U_y)\setminus U_n\subseteq O$.
Then, for each $x\in F$, there is a pointwise countable open partial refinement $\mathscr{C}_x$ of $\mathscr{C}$ in $X_\delta$ such that $x\in V_x=\mathrm{int}_\tau(\bigcup\mathscr{C}_x)$.
Because $X$ is metalindel\"of and $F$ is closed then $\{\,V_x:x\in F\,\}$ has a pointwise countable open partial refinement $\mathcal{W}=\{\,W_i:i\in I\,\}$, where $W_i\ne W_j$ whenever $i\ne j$.
For each $i\in I$, choose $x_i\in F$ such that $W_i\subseteq V_{x_i}$.
Consider the family
\[
\mathscr{D}_n=\{\,W_i\cap C:i\in I\text{ e }C\in\mathscr{C}_{x_i}\,\}.
\]

It is easily checked that each $\mathscr{D}_n$ is a pointwise countable open partial refinement of $\mathscr{C}$ which covers $\mathrm{cl}_{\tau}(U_y)\setminus U_n$.
So,
\[
\mathscr{C}'=\{C_y\}\cup\bigcup\{\,\mathscr{D}_n:n\in\omega\,\}
\]
is a pointwise countable open partial refinement of $\mathscr{C}$ in $X_\delta$ such that $y\in U_y\subseteq\mathrm{int}_\tau(\bigcup\mathscr{C}')$, contradicting the fact that $y\notin O$.
Thus, $O=X$.
\end{proof}

\begin{theorem}
If $X$ is a regular $\mathrm{SP}$-scattered metacompact space, then $X_\delta$ is metacompact. 
\end{theorem}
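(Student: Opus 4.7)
The plan is to mimic the proof of Theorem~\ref{2}, systematically replacing ``locally countable'' by ``point-finite''. Let $O$ denote the set of all $x\in X$ such that $x\in\mathrm{int}_\tau\bigcup\mathscr{C}'$ for some point-finite open partial refinement $\mathscr{C}'$ of $\mathscr{C}$ in $X_\delta$. Once $O=X$ is known, the first part of the proof of Theorem~\ref{2}, with metacompactness of $\langle X,\tau\rangle$ in place of paralindel\"ofness, produces a point-finite open refinement of $\mathscr{C}$ in $X_\delta$.

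To show $O=X$, suppose otherwise and let $y\in X\setminus O$ and $U$ be as provided by Proposition~\ref{HRW2007:2.8}. Pick $C\in\mathscr{C}$ with $y\in C$. Here I choose the $G_\delta$ around $y$ more carefully than in Theorem~\ref{2}: using that $C$ is $\tau_\delta$-open and $X$ is $T_3$, choose a decreasing sequence $(U_n)_{n\geq 1}$ of $\tau$-open neighborhoods of $y$ with $\mathrm{cl}_\tau(U_n)\subseteq U$, $\mathrm{cl}_\tau(U_{n+1})\subseteq U_n$, and $C_y:=\bigcap_n U_n\subseteq C$, and set $U_0=X$. As in the proof of Theorem~\ref{2}, $C_y\cup O$ is $\tau$-open, so pick a $\tau$-open $U_y$ with $\mathrm{cl}_\tau(U_y)\subseteq C_y\cup O$.

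For each $m\geq 1$, set $F_m=\mathrm{cl}_\tau(U_y)\setminus U_m$, a $\tau$-closed subset of $O$. For $x\in F_m$, pick a point-finite open partial refinement $\mathscr{C}_x$ of $\mathscr{C}$ in $X_\delta$ with $x\in V_x=\mathrm{int}_\tau\bigcup\mathscr{C}_x$. By metacompactness of $\langle X,\tau\rangle$ applied to $\{V_x:x\in F_m\}\cup\{X\setminus F_m\}$, extract a point-finite $\tau$-open family $\{W_i^m:i\in I_m\}$ covering $F_m$ with $W_i^m\subseteq V_{x_i^m}$ for some $x_i^m\in F_m$, and shrink to
\[
\widetilde{W}_i^m := W_i^m\cap U_{m-1}\cap\bigl(X\setminus\mathrm{cl}_\tau(U_{m+1})\bigr).
\]
Since $\mathrm{cl}_\tau(U_{m+1})\subseteq U_m$, the shrunk family remains $\tau$-open and point-finite, and still covers $F_m\cap U_{m-1}$. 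Put $\mathscr{D}_m=\{\widetilde{W}_i^m\cap C:i\in I_m,\ C\in\mathscr{C}_{x_i^m}\}$ and $\mathscr{C}'=\{C_y\}\cup\bigcup_{m\geq 1}\mathscr{D}_m$.

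The last task is to check that $\mathscr{C}'$ is a point-finite open partial refinement of $\mathscr{C}$ covering $\mathrm{cl}_\tau(U_y)$. For $z\in C_y$: $z\in U_{m+1}\subseteq\mathrm{cl}_\tau(U_{m+1})$ for every $m$, so $z$ belongs to no $\widetilde{W}_i^m$ and hence only to $C_y$. For $z\notin C_y$, set $k_z=\min\{k\geq 1:z\notin U_k\}$; then $\widetilde{W}_i^m\ni z$ forces $z\in U_{m-1}$, hence $m\leq k_z$, leaving finitely many $m$'s, each contributing finitely many members of $\mathscr{D}_m$. This yields $y\in U_y\subseteq\mathrm{int}_\tau\bigcup\mathscr{C}'$, contradicting $y\notin O$. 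The main obstacle---and the only substantive difference from Theorem~\ref{2}---is precisely this cancellation at points of $C_y$: a countable union of point-finite families is only $\sigma$-point-finite in general, so one must use the freedom to reshape the $G_\delta$ as a sequence with $\mathrm{cl}_\tau(U_{n+1})\subseteq U_n$ and the extra shrinking to $U_{m-1}\cap(X\setminus\mathrm{cl}_\tau(U_{m+1}))$ to force each $\widetilde{W}_i^m$ to miss $C_y$.
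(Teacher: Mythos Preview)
Your proof is correct and follows essentially the same approach as the paper's own argument: both refine the proof of Theorem~\ref{2} by choosing the $U_n$ to satisfy $\mathrm{cl}_\tau(U_{n+1})\subseteq U_n$ and then intersecting with the annuli $U_{m-1}\setminus\mathrm{cl}_\tau(U_{m+1})$ (the paper writes these as $U_n\setminus\mathrm{cl}_\tau(U_{n+2})$) so that each point lies in at most two of the families $\mathscr{D}_m$, which is exactly the obstacle you identified. Your write-up is in fact slightly cleaner than the paper's, which leaves $F_n$ undefined and overstates $\bigcup\mathscr{D}_n$.
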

\begin{proof}
Let $\mathscr{C}$ be an open cover of $X_\delta$ and consider the set $O$ whose elements are all $x\in X$ such that $x\in\mathrm{int}_\tau(\bigcup\mathscr{C}')$ for some pointwise finite open partial refinement $\mathscr{C}'$ of $\mathscr{C}$ in $X_\delta$.

Similarly to what it has been done in theorem \ref{2}, we can get, from the assumption $O=X$, a pointwise finite open refinement of $\mathscr{C}$ in $X_\delta$.

We complete the proof by showing that $O=X$.
Suppose that $O\ne X$.
Since $\langle X,\tau\rangle$ is $\mathrm{SP}$-scattered and regular, by the proposition \ref{HRW2007:2.8}, there are a point $y\in X\setminus O$ and an open neighborhood $U$ of $y$ in $X$ such that $(X\setminus O)\cap U$ is a $P$-subspace of $X$.
Take a $C_y\in\mathscr{C}$ such that $y\in C_y$.
We can suppose that $C_y=\bigcap\{\,U_n:n\in\omega\,\}$, where for each $n\in\omega$, $U_n\in\tau$ and $\mathrm{cl}_\tau(U_{n+1})\subseteq U_n\subseteq\mathrm{cl}_\tau(U_n)\subseteq U$.
Note that $C_y\cup O$ is an open subset of $X$.
Once $X$ is regular, $y$ has an open neighborhood $U_y$ in $X$ such that $\mathrm{cl}_\tau(U_y)\subseteq(C_y\cup O)\cap U_0$.

As $F_n\subseteq O$, for each $x\in F_n$, there exists a pointwise finite open partial refinement $\mathscr{C}_x$ of $\mathscr{C}$ in $X_\delta$ such that $x\in V_x=\mathrm{int}_\tau(\bigcup\mathscr{C}_x)$.
Since $\langle X,\tau\rangle$ is metacompact and $F_n$ is closed then $\mathcal{V}=\{\,V_x:x\in F_n\,\}$ has a pointwise finite open partial refinement $\mathcal{W}$ which covers $F_n$.
We can suppose that $\mathcal{W}=\{\,W_i:i\in I\,\}$, where $W_i\ne W_j$ whenever $i\ne j$.
For each $i\in I$, choose $x_i\in F_n$ such that $W_i\subseteq V_{x_i}$.
Consider the family
\[
\mathscr{D}_n=\{\,W_i\cap C\cap(U_n\setminus\mathrm{cl}_\tau(U_{n+2}):i\in I\text{ and }C\in\mathscr{C}_{x_i}\,\}.
\]
Note that each $\mathscr{D}_n$ is a pointwise finite open partial refinement of $\mathscr{C}$ in $X_\delta$ such that $\bigcup\mathscr{D}_n=U_n\setminus \mathrm{cl}_\tau(U_{n+2})$.
Therefore,
\[
\mathscr{C}'=\{C_y\}\cup\bigcup\{\,\mathscr{D}_n:n\in\omega\,\}
\]
is a pointwise finite open partial refinement of $\mathscr{C}$ in $X_\delta$ such that $y\in U_y\subseteq\mathrm{int}_\tau(\bigcup\mathscr{C}')$, contradicting the fact that $y\notin O$.
\end{proof}

\begin{theorem}
If $X$ is a regular $\mathrm{SP}$-scattered linearly Lindel\"of space, then $X_\delta$ is linearly Lindel\"of. 
\end{theorem}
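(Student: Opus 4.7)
The plan is to follow the same skeleton as the proof of Theorem \ref{2}, replacing ``locally countable open partial refinement'' by ``countable subfamily of $\mathscr{C}$'', and using the linear Lindel\"ofness of $\langle X,\tau\rangle$ in place of paralindel\"ofness to shrink the intermediate cover of $X$.

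Given an open cover $\mathscr{C}$ of $X_\delta$ that is linearly ordered by inclusion, I would first pass to a strictly increasing cofinal subchain $\{C_\alpha:\alpha<\mu\}$; if $\mu\leq\omega$ there is nothing to do, so assume $\mu$ is regular uncountable. Define $O$ to be the set of $x\in X$ such that $x\in\mathrm{int}_\tau\bigcup\mathscr{C}'$ for some countable $\mathscr{C}'\subseteq\mathscr{C}$. If $O=X$, pick for each $x$ a countable $\mathscr{C}_x\subseteq\mathscr{C}$ with $x\in V_x=\mathrm{int}_\tau\bigcup\mathscr{C}_x$; since $\mu$ is regular uncountable, the countable set $\mathscr{C}_x$ is bounded in the chain, so there is $\alpha(x)<\mu$ with $V_x\subseteq C_{\alpha(x)}$. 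The sets $W_\beta=\bigcup\{V_x:\alpha(x)\leq\beta\}$ form a $\tau$-open chain cover of $X$, and linear Lindel\"ofness of $\langle X,\tau\rangle$ delivers $\beta^*<\mu$ with $W_{\beta^*}=X$, whence $C_{\beta^*}=X$, producing the desired countable (in fact singleton) subcover.

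The bulk of the work is showing $O=X$, for which I would essentially mimic Theorem \ref{2}. Assuming $O\ne X$, Proposition \ref{HRW2007:2.8} supplies $y\in X\setminus O$ and a $\tau$-open $U\ni y$ making $(X\setminus O)\cap U$ a $P$-subspace; pick $C_y\in\mathscr{C}$ with $y\in C_y=\bigcap_n U_n$ and $\mathrm{cl}_\tau(U_n)\subseteq U$, then use regularity of $X$ (noting that $C_y\cup O$ is $\tau$-open) to choose $U_y\ni y$ with $\mathrm{cl}_\tau(U_y)\subseteq C_y\cup O$. For each $n$, the closed set $F_n=\mathrm{cl}_\tau(U_y)\setminus U_n$ lies in $O$, so the same chain-of-$W_\beta$ construction, applied inside the closed---and therefore linearly Lindel\"of---subspace $F_n$, yields some $\beta^*_n<\mu$ with $F_n\subseteq C_{\beta^*_n}$. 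Setting $\beta^*=\sup_n\beta^*_n$, which remains strictly below $\mu$ by regularity, the decomposition $\mathrm{cl}_\tau(U_y)=(\mathrm{cl}_\tau(U_y)\cap C_y)\cup\bigcup_n F_n$ gives $\mathrm{cl}_\tau(U_y)\subseteq C_y\cup C_{\beta^*}$, so $y\in U_y\subseteq\mathrm{int}_\tau(C_y\cup C_{\beta^*})$ with $\{C_y,C_{\beta^*}\}\subseteq\mathscr{C}$ countable, contradicting $y\notin O$.

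The main obstacle will be the conversion of the pointwise data $V_x\subseteq C_{\alpha(x)}$ into a $\tau$-open chain on which linear Lindel\"ofness of $\langle X,\tau\rangle$ (or of the closed subspace $F_n$) can actually be invoked; regularity of $\mu$ is essential at two places, namely to bound each countable $\mathscr{C}_x$ in the chain, and to keep $\sup_n\beta^*_n$ strictly below $\mu$ in the final contradiction.
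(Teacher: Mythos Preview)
Your argument is correct and follows essentially the same route as the paper's proof. The only cosmetic difference is that the paper starts from an arbitrary open cover $\{C_\alpha:\alpha<\kappa\}$ of $X_\delta$ with $\kappa$ regular uncountable and builds the chain $V_\alpha=\mathrm{int}_\tau\bigl(\bigcup_{\beta\le\alpha}C_\beta\bigr)$, whereas you begin with a chain cover and pass to a cofinal subchain of regular length $\mu$; in both cases the set $O$ reduces to $\bigcup_\alpha\mathrm{int}_\tau(\text{initial segment})$, and the contradiction for $O\neq X$ (via Proposition~\ref{HRW2007:2.8}, the $P$-space argument making $C_y\cup O$ $\tau$-open, and linear Lindel\"ofness of the closed sets $F_n$) is identical.
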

\begin{proof}
Let $\mathscr{C}=\{\,C_\alpha:\alpha<\kappa\,\}$ be an open cover of $X_\delta$, where $\kappa$ is an uncountable regular cardinal.
For each $\alpha<\kappa$, let
\[
V_\alpha=\mathrm{int}_\tau\left(\bigcup\{\,C_\beta:\beta\leq\alpha\,\}\right).
\]
Define
\[
O=\left\{\,x\in X:\text{ there exists $\alpha(x)<\kappa$ such that $x\in V_{\alpha(x)}$}\,\right\}.
\]

\begin{claim*}
$O=X$ 
\end{claim*}
\begin{proof of claim}
Suppose on the contrary that $O\ne X$.
As $\langle X,\tau\rangle$ is a $\mathrm{SP}$-scattered regular space, by the proposition \ref{HRW2007:2.8}, there are $y\in X\setminus O$ and an open neighborhood $U$ of $y$ in $X$ such that $(X\setminus O)\cap U$ is a $P$-subspace of $X$.
Choose $\alpha_y<\kappa$ such that $y\in C_{\alpha_y}$.
We can suppose that $C_{\alpha_y}=\bigcap\{\,U_n:n\in\omega\,\}$, where, for each $n\in\omega$, $U_n\in\tau$ and $\mathrm{cl}_\tau(U_n)\subseteq U$.
Note that $C_y\cup O$ is an open subset of $X$.
Once $X$ is regular, $y$ has an open neighborhood $U_y$ in $X$ such that $\mathrm{cl}_\tau(U_y)\subseteq C_{\alpha_y}\cup O$.

Fix $n\in\omega$.
Let $F_n=\mathrm{cl}_\tau(U_y)\setminus U_n$.
Note that $F_n$ is a closed subset of $X$ and so it is linearly Lindel\"of.
Moreover, $F_n\subseteq O$;
this implies that, for each $x\in F_n$, we can take $\alpha(x)<\kappa$ such that $x\in V_{\alpha(x)}$.
Then, $\left\{\,V_{\alpha(x)}:x\in F_n\,\right\}$ is a family of open subsets of $X$ which covers $F_n$ and it is linearly ordered by inclusion.
Therefore, there is a countable subset $E_n\subseteq F_n$ such that $\left\{\,V_{\alpha(x)}:x\in E_n\,\right\}$ covers $F_n$.
So, $\alpha=\sup(\{\alpha_y\}\cup\bigcup\{\,E_n:n\in\omega\,\})<\kappa$ e
\[
y\in U_y\subseteq C_{\alpha_y}\cup\bigcup\{\,F_n:n\in\omega\,\}\subseteq\bigcup\{\,C_\beta:\beta\leq\alpha\,\}.
\]
Then $y\in V_\alpha$ and, thus, $y\in O$.
This is a contradiction.
\end{proof of claim}

By the claim above, $\mathcal{V}=\{\,V_\alpha:\alpha<\kappa\,\}$ is an open cover of $\langle X,\tau\rangle$.
Since $\langle X,\tau\rangle$ is a linearly Lindelöf space, $\mathcal{V}$ has a subcover whose cardinality is less than $\kappa$.
Because $\kappa$ is regular, $V_\alpha=X$ for some $\alpha<\kappa$.
Thus, $\{\,C_\beta:\beta<\alpha\,\}$ is a subcover of $\mathscr{C}$ whose cardinality is less than $\kappa$.
\end{proof}

\section{Other generalizations of scattered}

Clearly, if a regular Lindel\"of space $X$ is a countable union of scattered closed subspaces, then $X_\delta$ is Lindel\"of. As we shall see, at least consistently, this is not the case when it is not required that the subspaces are closed.

A space is \textbf{$\sigma$-scattered} if it is an union of a countable family of scattered subspaces. 

\begin{example}\label{3}
Assuming $\mathsf{CH}$, there exists a regular $\sigma$-scattered Lindel\"of space whose $G_\delta$-refinement is not Lindel\"of.
\end{example}
\begin{proof}
It is enough to take a Luzin subset of the real line containing the rational numbers and consider it as a subspace of the Michael line. 
\end{proof}

\begin{question}
Is there a regular $\sigma$-scattered Lindel\"of space whose $G_\delta$-refinement is not Lindel\"of?
%it consistent with $\mathsf{ZFC}$ that the $G_\delta$-refinement
\end{question}

\begin{question}
Is there a regular $\sigma$-scattered paracompact space whose $G_\delta$-refinement is not paracompact?
\end{question}

Hdeib and Pareek introduced in \cite{HP1989} the following natural generalization of scattered spaces: a space $X$ is \textbf{$\omega$-scattered} if, for each non-empty subset $A$ of $X$, there exist a point $x\in A$ and an open neighborhood $U_x$ of $x$ such that $U_x\cap A$ is countable.

Every scattered space is $\omega$-scattered, but the reverse is not true: the set of rational numbers with the usual topology is $\omega$-scattered and non-scattered.

The theorem 3.12 of \cite{HP1989} states that in the class of regular $\omega$-scattered spaces the Lindel\"of property is preserved by $G_\delta$-refinements. 
However, this is not true once the space of the example \ref{3} is $\omega$-scattered.

\begin{question}
Is there a Hausdorff $\omega$-scattered paracompact space $X$ such that $X_\delta$ is not paracompact?
\end{question}

A space $X$ is \textbf{$N$-scattered} if every nowhere dense subset of $X$ is a scattered subspace of $X$. 
The next example was noticed by Santi Spadaro.

\begin{example}
Assuming $\mathsf{CH}$, there exists a $N$-scattered Lindel\"of space whose $G_\delta$-refinement is not Lindel\"of.
\end{example}
\begin{proof}
Let $\mathcal{M}$ the family of all Lebesgue measurable subsets of the real line.
For each $E\in\mathcal{M}$, define
\[
\Phi(E)=\left\{\,x\in\mathbb{R}:\lim_{h\to 0}\frac{m\left(E\cap{]x-h,x+h[}\right)}{2h}=1\,\right\}.
\] 
Then
\[
\tau_d=\left\{\,E\in\mathcal{M}:E\subseteq\Phi(E)\,\right\}
\]
is a topology on $\mathbb{R}$ stronger than that usual, well known as \textit{density topology}. %(veja \cite{Tal1976})
Denote by $\mathbb{R}_d$ the topological space $\langle\mathbb{R},\tau_d\rangle$.
% % Tome $X=\mathbb{R}$ munido da topologia da densidade (veja \cite{Tal1976}).
By corollary 4.3 of \cite{Tal1976}, $\mathsf{CH}$ implies that  $\langle\mathbb{R},\tau_d\rangle$ has a hereditarily Lindel\"of, non-separable, regular and Baire subspace $X$.
By theorem 2.7 of \cite{Tal1976}, every nowhere dense subset of $X$ is discrete (and closed). 
Therefore, $X$ is $N$-scattered.
On the other side, the pseudocharacter of $X$ is countable, for $X$ is Hausdorff and hereditarily Lindel\"of.
Then $X_\delta$ is discrete and uncountable and, thus, it is not Lindel\"of.
\end{proof}

\begin{question}
Is there a Hausdorff paracompact $N$-scattered space whose $G_\delta$-refinement is not a paracompact space?
\end{question}

\section{The tightness of  \texorpdfstring{$G_\delta$}{Gdelta}-refinement of  \texorpdfstring{$\sigma$}{sigma}-products}

Given a  family of topological spaces $\{\,X_i:i\in I\,\}$ and a point $x^\ast\in X=\prod\{\,X_i:{i\in I}\,\}$, define
\[
\sigma=\sigma(X)=\sigma(X,x^\ast)=\left\{\,x\in \prod_{i\in I}X_i:\mathrm{supp}(x)\text{ is finite}\,\right\},
\]
where $\mathrm{supp}(x)=\{\,i\in I:x(i)\ne x^\ast(i)\,\}$.
The \textit{$\sigma$-product of $X$ at $x^\ast$} is the set $\sigma$ equipped with the topology induced by the Tychonoff product $\prod\{\,X_i:{i\in I}\,\}$\@.

In \cite{JSSW}, Juh\'asz, Soukup, Szentmikl\'ossy and Weiss proved:

\begin{theorem}
Let $\kappa$ and $\lambda$ be cardinals, with $\kappa\leq\aleph_1$.
Let $X$ be the one point lindel\"ofication of a discrete space of cardinality $\kappa$ by a point $p$ and let $x^\ast\in X^\kappa$, where $x^\ast(\alpha)=p$ for all $\alpha<\kappa$.
Then $\left(\sigma(X^\kappa,x^\ast)\right)_\delta$ has tighness $\aleph_1$.
\end{theorem}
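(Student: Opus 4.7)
The plan is to establish both inequalities $t((\sigma)_\delta)\leq\aleph_1$ and $t((\sigma)_\delta)\geq\aleph_1$, writing $\sigma:=\sigma(X^\kappa,x^\ast)$. For the upper bound I will use the characterization recalled before Section~2 (that $t(Z)\leq\lambda$ iff every $\lambda$-closed subset of $Z$ is closed) together with Passos's Theorem~\ref{1} applied at $\aleph_1$, which is available since $\mathrm{Cov}_\omega(\aleph_1)=\aleph_1$. For the lower bound I will exhibit, in the nontrivial case $\kappa=\aleph_1$, an explicit witness.

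The key observation for the upper bound is that $G_\delta$-neighborhoods of $p$ in $X$ coincide with its cocountable neighborhoods, so a basic $G_\delta$-neighborhood of a point $z\in\sigma$ with finite support $s(z)$ has the form
\[
U(T,(C_i))=\{\,y\in\sigma:y(i)=z(i)\text{ for }i\in s(z);\;y(i)\notin C_i\text{ for }i\in T\setminus s(z)\,\},
\]
with $T\in[\kappa]^{\aleph_0}$, $T\supseteq s(z)$, and each $C_i\in[\kappa]^{\aleph_0}$. Fix $F\subseteq\sigma$ and $z$ in the $G_\delta$-closure of $F$; by Theorem~\ref{1} pick an $\omega$-covering elementary submodel $M$ of $H(\theta)$ of size $\aleph_1$ containing $\aleph_1\cup\{z,F,X,x^\ast\}$. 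For any $U=U(T,(C_i))$, set $C=\bigcup_{i\in T\setminus s(z)}C_i$ and use $\omega$-coveringness to obtain countable $T',C'\in M$ with $T\subseteq T'$ and $C\subseteq C'$. The neighborhood
\[
U'=\{\,y\in\sigma:y(i)=z(i)\text{ for }i\in s(z);\;y(i)\notin C'\text{ for }i\in T'\setminus s(z)\,\}
\]
lies in $M$ (its defining parameters do, using that $s(z)$ is definable from $z$ and $x^\ast$) and satisfies $z\in U'\subseteq U$. Since $U'$ is a $G_\delta$-neighborhood of $z$, $U'\cap F\neq\emptyset$, and by elementarity this intersection meets $M$. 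Hence $z$ is in the $G_\delta$-closure of $F\cap M$, a set of size at most $\aleph_1$.

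For the lower bound, assume $\kappa=\aleph_1$, fix any $q\in\kappa$, and for each $\alpha<\kappa$ let $e_\alpha\in\sigma$ be the point with $e_\alpha(\alpha)=q$ and $e_\alpha(\beta)=p$ for $\beta\neq\alpha$; set $A=\{e_\alpha:\alpha<\kappa\}$. Any basic $G_\delta$-neighborhood of $x^\ast$ constrains only countably many coordinates, hence contains $e_\alpha$ for all but countably many $\alpha$, so $x^\ast$ is in the $G_\delta$-closure of $A$. On the other hand, for any countable $B=\{e_\alpha:\alpha\in S\}\subseteq A$, the $G_\delta$-neighborhood $\{y\in\sigma:y(i)\neq q\text{ for all }i\in S\}$ of $x^\ast$ is disjoint from $B$, so $x^\ast$ is not in the $G_\delta$-closure of $B$.

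I expect the main obstacle to be the bookkeeping in the upper bound: writing out a general basic $G_\delta$-neighborhood in the coordinate form above (exploiting that supports in $\sigma$ are finite, so $s(z)\subseteq M$ whenever $z\in M$), and verifying that the shrunken neighborhood $U'$ is simultaneously an element of $M$ and a subset of $U$, so that elementarity can be applied to the assertion ``$U'\cap F\neq\emptyset$''. The remaining steps reduce to routine verifications.
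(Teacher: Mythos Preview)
The paper does not prove this theorem: it is quoted, without proof, as a result of Juh\'asz, Soukup, Szentmikl\'ossy and Weiss from \cite{JSSW}, so there is no argument in the paper to compare your attempt against directly.

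That said, your proof is correct, and its upper-bound mechanism is precisely the one the paper later uses for the general result in Lemma~\ref{161}: an $\omega$-covering elementary submodel $M$ of size $\aleph_1$ (available since $\mathrm{Cov}_\omega(\aleph_1)=\aleph_1$) together with elementarity applied to the statement ``$U'\cap F\neq\emptyset$''. The difference is only one of packaging: Lemma~\ref{161} reduces via projections to countable sub-$\sigma$-products and invokes their tightness, while you exploit directly the concrete form of $G_\delta$-neighborhoods in the one-point Lindel\"ofication (cocountable sets on countably many coordinates) to shrink an arbitrary basic neighborhood of $z$ to one lying in $M$. Your lower-bound witness $A=\{e_\alpha:\alpha<\aleph_1\}$ is the natural one; it only works when $\kappa=\aleph_1$, but for countable $\kappa$ the space $\sigma$ itself is countable, so the statement as printed (with the unused cardinal $\lambda$ and the constraint $\kappa\leq\aleph_1$) evidently carries typos from the source.

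Two small points to tighten in a final write-up: when you pick the countable $C'\in M$ containing $C$, intersect with $\kappa\in M$ so that $p\notin C'$ and hence $z\in U'$; and your reference to the $\lambda$-closed characterization of tightness is not actually used, since you argue the bound directly by producing a subset of $F$ of size $\leq\aleph_1$ whose closure contains $z$.
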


In the same article, it was asked:

\begin{question}
Assume that $X$ is a Lindel\"of $P$-space such that $t(X)=\aleph_1$.
Is it true that 
\[t(\sigma(X^\kappa)_\delta)=\aleph_1\]
for all cardinal $\kappa$? % and all point $x^\ast\in X^\kappa$?
\end{question}

We will see that the answer is positive.

% \begin{theorem}[\cite{JSSW}]
% If $X$ is a $P$-space and $Y$ is a Lindel\"of $P$-space, so
% \[
% t(X\times Y)=t(X)\cdot t(Y). %\max\{t(X),t(Y)\}.
% \]
% \end{theorem}

% Como a propriedade de ser um $P$-space de Lindel\"of é finitamente produtiva, temos:
% % O seguinte resultado, consequência imediata do lema 2.3 de \cite{JSSW}, será útil: %, \citeauthor{JSSW} provam o seguinte:
% 
\begin{lemma}[\cite{DW1986}]\label{160}
If $\{\,X_i:i\leq n\,\}$ is a finite family of regular locally Lindel\"of $P$-spaces, then  
\[
t\left(\prod\{\,X_i:i\leq n\,\}\right)=\max\{\,t(X_i):i\leq n\,\}.
%t\left(\prod\mathcal{S}\right)=\max\{\,t(X_i):i\leq n\,\}.
\]
\end{lemma}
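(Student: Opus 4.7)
The proof is by induction on $n$; since a product of two regular locally Lindel\"of $P$-spaces is again of the same form (regularity and the $P$-property are plainly preserved by finite products, and a product of two Lindel\"of $P$-spaces is Lindel\"of by a standard argument that iteratively uses the $P$-property to convert countable open covers of each factor into single open neighborhoods), it suffices to treat the case $n=2$. Set $\kappa=\max\{t(X),t(Y)\}$; the inequality $t(X\times Y)\geq\kappa$ is immediate from the closed embeddings $X\hookrightarrow X\times\{y_0\}$ and $Y\hookrightarrow\{x_0\}\times Y$, so the content lies in the reverse inequality.

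For this, let $A\subseteq X\times Y$ and $(x_0,y_0)\in\overline{A}$; I aim to find $B\subseteq A$ with $|B|\leq\kappa$ and $(x_0,y_0)\in\overline{B}$. Using local Lindel\"ofness, I would fix open Lindel\"of neighborhoods $U_0\ni x_0$ and $V_0\ni y_0$ and replace $A$ by $A\cap(U_0\times V_0)$; this is harmless since $U_0\times V_0$ is an open neighborhood of $(x_0,y_0)$. So I may assume that $X$ and $Y$ are themselves Lindel\"of $P$-spaces.

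The plan is an elementary submodel reflection in the spirit of Theorem \ref{1}: pick $M\prec H_\theta$ for a sufficiently large regular $\theta$, with $\{X,Y,A,x_0,y_0,\kappa\}\subseteq M$, $|M|=\kappa$, and $M$ $\omega$-covering. The crucial technical claim is that for every open neighborhood $V$ of $y_0$ in $Y$ there exists $V'\in M$ open with $y_0\in V'\subseteq V$, and analogously for $X$. Granted this, any basic open neighborhood $U\times V$ of $(x_0,y_0)$ contains some $U'\times V'\in M$ with $(x_0,y_0)\in U'\times V'$; since $M\models(x_0,y_0)\in\overline{A}$, elementarity provides a witness of $A\cap(U'\times V')$ lying in $M$. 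Setting $B:=A\cap M$ then yields $|B|\leq\kappa$ and $(x_0,y_0)\in\overline{B}$, as required.

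The main obstacle is the key claim that an $\omega$-covering elementary submodel of size $\kappa$ captures a local base at each of its points inside a regular Lindel\"of $P$-space. The intended route: given open $V\ni y_0$, the closed set $C=Y\setminus V$ is Lindel\"of, and regularity gives for each $z\in C$ an open $V_z\ni y_0$ with $z\notin\overline{V_z}$; Lindel\"ofness of $C$ extracts a countable subfamily $\{V_{z_n}:n\in\omega\}$ whose complements cover $C$, and the $P$-property makes $V^{*}=\bigcap_n V_{z_n}$ open and contained in $V$. The delicate work lies in arranging the $V_{z_n}$'s inside $M$, which is where the $\omega$-covering property of $M$, the bound $t(Y)\leq\kappa$, and elementarity must be combined to run the above construction inside $M$. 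This interplay of Lindel\"ofness, the $P$-property, and tightness is exactly what forces the finiteness of the index set and explains why the corresponding equality can fail for infinite products.
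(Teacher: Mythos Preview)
The paper does not give a proof of this lemma; it is quoted from \cite{DW1986}, so there is no in-paper argument to compare against. As to your sketch, the reduction to a product of two Lindel\"of $P$-spaces is fine, but the elementary-submodel plan breaks down at two points.

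First, Theorem~\ref{1} only produces an $\omega$-covering $M$ of size $\kappa$ under the hypothesis $\mathrm{Cov}_\omega(\kappa)=\kappa$, which the lemma does not assume; for $\kappa=\aleph_\omega$ no such $M$ exists. Second, and more decisively, your ``crucial technical claim'' that every open $V\ni y_0$ contains an open $V'\in M$ is false. Let $Y$ be the one-point Lindel\"ofication of a discrete space $D$ of size $\aleph_2$, with $y_0$ the added point: this is a regular Lindel\"of $P$-space with $t(Y)=\aleph_1$ but $\chi(y_0,Y)=\aleph_2$. For any elementary submodel $M$ of size $\aleph_1$ containing $y_0$ and any $d\in D\setminus M$, the neighborhood $V=Y\setminus\{d\}$ contains no open $V'\in M$ with $y_0\in V'$, since any such $V'$ has the form $Y\setminus C'$ with $C'\in M$ countable, whence $C'\subseteq M$ and $d\in V'\not\subseteq V$. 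The point is that character can far exceed tightness even in regular Lindel\"of $P$-spaces, so an elementary submodel of size $t(Y)$ cannot in general trap a local base at $y_0$; your outlined route through ``arranging the $V_{z_n}$'s inside $M$'' is therefore blocked, not merely delicate. The Dissanayake--Willard argument does not pass through this claim.
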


\begin{lemma}\label{4}
If $X=\sigma\{\,X_n:n\in\omega\,\}$ is a $\sigma$-product of regular locally Lindel\"of $P$-spaces, then
% , where each $X_n$ is a regular locally Lindel\"of $P$-space, then 
\[
%t\left(\sigma\left(\prod\{\,X_n:n\in\omega\,\},x^\ast\right)_\delta\right)=\sup\{\,t(X_n):n\in\omega\,\}.
t\left(X_\delta\right)=\sup\{\,t(X_n):n\in\omega\,\}.
\] 
\end{lemma}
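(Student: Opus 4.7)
Let $\kappa=\sup\{t(X_n):n\in\omega\}$, which is at least $\aleph_0$. The plan is to prove both inequalities $t(X_\delta)\geq\kappa$ and $t(X_\delta)\leq\kappa$ separately, with the upper bound being the substantive step.

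For the lower bound, fix $n\in\omega$ and consider the slice $Y_n=\{y\in\sigma:\mathrm{supp}(y)\subseteq\{n\}\}$. It is closed in $\sigma$ and canonically homeomorphic to $X_n$. Since $X_n$ is a $P$-space, every $G_\delta$-set of $Y_n$ is already open in $Y_n$, so the subspace topology inherited from $X_\delta$ coincides with the original topology of $Y_n$. By heredity of tightness, $t(X_n)=t(Y_n)\leq t(X_\delta)$.

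For the upper bound I would decompose $\sigma$ as the \emph{countable} union $\sigma=\bigcup_{F\in[\omega]^{<\omega}}\sigma_F$, where $\sigma_F=\{y\in\sigma:\mathrm{supp}(y)\subseteq F\}$. Each $\sigma_F$ is closed in $X$ (hence in $X_\delta$) and carries the topology of $\prod_{n\in F}X_n$; in particular it is a regular locally Lindel\"of $P$-space, so Lemma \ref{160} gives $t(\sigma_F)\leq\kappa$, and the same $P$-space argument as above shows the $X_\delta$-subspace topology on $\sigma_F$ coincides with its original topology. Now let $A\subseteq\sigma$ and $x\in\mathrm{cl}_{X_\delta}(A)$; write $A=\bigcup_F(A\cap\sigma_F)$. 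If $x\notin\mathrm{cl}_{X_\delta}(A\cap\sigma_F)$ for every $F$, choose for each such $F$ a $G_\delta$-neighborhood $V_F$ of $x$ in $X$ with $V_F\cap(A\cap\sigma_F)=\emptyset$; then $V=\bigcap_F V_F$ is a countable intersection of $G_\delta$'s, hence still a $G_\delta$, so it is an $X_\delta$-neighborhood of $x$ missing $A$, contradicting $x\in\mathrm{cl}_{X_\delta}(A)$. So $x\in\mathrm{cl}_{X_\delta}(A\cap\sigma_F)$ for some $F$; in particular $x\in\sigma_F$, and applying $t(\sigma_F)\leq\kappa$ inside $\sigma_F$ produces $B\subseteq A\cap\sigma_F\subseteq A$ with $|B|\leq\kappa$ and $x\in\mathrm{cl}_{X_\delta}(B)$.

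The only real subtlety is the step that extracts a single $F$ with $x\in\mathrm{cl}_{X_\delta}(A\cap\sigma_F)$: it depends crucially on $[\omega]^{<\omega}$ being countable, which is precisely what keeps $\bigcap_FV_F$ a $G_\delta$ of $X$ and thus lets one reduce the $\sigma$-product tightness question to the finite-product case handled by Lemma \ref{160}.
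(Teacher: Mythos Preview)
Your argument is correct and follows essentially the same route as the paper: decompose $\sigma$ into countably many closed finite-support slices, use the $P$-space property of $X_\delta$ to land in the closure of $A$ intersected with a single slice, and then invoke Lemma~\ref{160} on that slice. The paper indexes the slices by initial segments $n\in\omega$ rather than by all finite $F\in[\omega]^{<\omega}$ and phrases the final step via the projection $\pi_m$ onto $\prod_{i<m}X_i$, but this is only a cosmetic difference; your explicit treatment of the lower bound and of why the $X_\delta$-subspace topology on each slice agrees with its own topology is a welcome addition that the paper leaves implicit.
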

\begin{proof}
Let $\lambda=\sup\{\,t(X_n):n\in\omega\,\}$.
Let $Y$ be a non-closed subset of $\sigma_\delta=\sigma\left(X,x^\ast\right)_\delta$ and let $q\in \mathrm{cl}(Y)\setminus Y$.
	% % Seja $M$ um submodelo elementar $\omega$-covering de cardinality $\aleph_1$ such that $Y\cdots \in M$.
	% % Suponha que 
	% % \[
	% % q\in V=\prod_{n\in\omega}V_n,
	% % \]
	% % onde cada $V_n=\{\infty\}\cup(\calA\setminus\calE_n)\cup(\omega_1\setminus(\bigcup\calE_n\cup E_n))$, com $\calE_n\subseteq\calA$ e $E_n\subseteq \omega_1$ enumeráveis. 
For each $n\in\omega$, let
\[
Y_n=\{\,y\in Y:\mathrm{supp}(y)\subseteq n\,\}.
\]
Since $Y=\bigcup\{\,Y_n:n\in\omega\,\}$ and $\sigma(X,x^\ast)_\delta$ is a $P$-space, there exists a $m\in\omega$ such that $q\in\mathrm{cl}(Y_m)$.
Now, $\pi_m(q)\in\mathrm{cl}\left(\pi_{m}[Y_m]\right)$, where $\pi_m$ is the natural projection from $\prod\{\,X_n:n\in\omega\,\}$ in $\prod\{\,X_i:i\in m\,\}$.
Since by the corollary \ref{160} the tightness of $\prod\{\,X_i:i\in m\,\}$ is $\leq\lambda$, there exists $Z'\subseteq \pi_{m}[Y_m]$ of cardinality $\leq\lambda$ such that $\pi_m(q)\in\mathrm{cl}(Z')$.
Then $Z=Z'\times\prod\{\,\{x^\ast(n)\}:n\geq m\,\}\subseteq Y_m\subseteq Y$ and since $\mathrm{supp}(q)\subseteq m$, we have $q\in\mathrm{cl}(Z)$.
\end{proof}

\begin{lemma}\label{161}
Let $\kappa$ be a infinite cardinal.
Let $X=\prod\{\,X_\alpha:\alpha<\kappa\,\}$. % be a product of topological spaces.
If for each countable subset $I\subseteq \kappa$, $\sigma\{\,X_\alpha:\alpha\in I\,\}_\delta$
% $\pi_I\left[\sigma(X,x^\ast)_\delta\right]$
% $\sigma\left(\prod\{\,X_\alpha:\alpha\in I\,\},x^\ast\upharpoonright I\right)_\delta$ 
has tightness $\leq\lambda$, with $\mathrm{Cov}_\omega(\lambda)=\lambda$, then $\sigma(X)_\delta$ has tightness $\leq\lambda$.
\end{lemma}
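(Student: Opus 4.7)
The plan is to use Passos's elementary submodel theorem to reduce the tightness estimate to the countable slices hypothesized. Given $Y \subseteq \sigma(X)$ with $q \in \mathrm{cl}_{\sigma(X)_\delta}(Y)$, I want to find $Z \subseteq Y$ with $|Z| \leq \lambda$ and $q \in \mathrm{cl}_{\sigma(X)_\delta}(Z)$. Using $\mathrm{Cov}_\omega(\lambda) = \lambda$, Theorem \ref{1} supplies an $\omega$-covering elementary submodel $M \prec H(\theta)$ (for $\theta$ large) of cardinality $\lambda$ containing $q$, $Y$, $\kappa$, $x^\ast$, the sequence $\langle X_\alpha : \alpha < \kappa\rangle$, and $\lambda$ as a subset. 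I would then take $Z = Y \cap M$, which has cardinality at most $\lambda$.

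To verify $q \in \mathrm{cl}_{\sigma(X)_\delta}(Z)$, I would show that an arbitrary basic $G_\delta$-open neighborhood $V$ of $q$ meets $Z$; such a $V$ is determined by a countable $J \subseteq \kappa$ together with $G_\delta$-opens $U_\alpha \subseteq X_\alpha$ containing $q(\alpha)$. The core step is to locate a countable $I \in M$ with $\mathrm{supp}(q) \subseteq I$ such that $V \cap \sigma_I$ is a neighborhood of $q$ in $(\sigma_I)_\delta$ and $q \in \mathrm{cl}_{(\sigma_I)_\delta}(Y \cap \sigma_I)$. Granted such $I$, the hypothesis $t((\sigma_I)_\delta) \leq \lambda$, read off by elementarity inside $M$ (with parameters $q$, $Y \cap \sigma_I$, and $\sigma_I$ lying in $M$), produces a set $Z_I \in M$ with $Z_I \subseteq Y \cap \sigma_I$, $|Z_I| \leq \lambda$, and $q \in \mathrm{cl}_{(\sigma_I)_\delta}(Z_I)$; since $\lambda \subseteq M$, a bijection $\lambda \to Z_I$ living in $M$ forces $Z_I \subseteq M$, and hence $Z_I \subseteq Z$. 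Then $V \cap \sigma_I$, being an open neighborhood of $q$ in $(\sigma_I)_\delta$, must meet $Z_I \subseteq Z$.

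The main obstacle will be building such an $I$, because $J$ itself need not be contained in $M$, so $\omega$-covering cannot be applied to $J$ directly. I would address this iteratively: start with a countable $I_0 \in M$ containing $\mathrm{supp}(q)$; at stage $n$, use $q \in \mathrm{cl}_{\sigma(X)_\delta}(Y)$ to produce a witness $y_n \in V \cap Y$, and, leveraging elementarity to select such witnesses within $M$ whenever possible, absorb $\mathrm{supp}(y_n) \cup I_n$ into a countable $I_{n+1} \in M$ via the $\omega$-covering property. Setting $I = \bigcup_n I_n$, one final application of $\omega$-covering yields a countable $I^\ast \in M$ containing $I$, and the resulting $I^\ast$ should satisfy the two required properties, closing the argument. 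The delicate point throughout is maintaining enough compatibility between the arbitrary neighborhood $V$ and the $M$-definable approximations so that the closure in $(\sigma_{I^\ast})_\delta$ reflects the needed intersection with $V$.
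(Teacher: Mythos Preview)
Your overall strategy---take an $\omega$-covering elementary submodel $M$ of size $\lambda$ containing the relevant parameters and show $q\in\mathrm{cl}(Y\cap M)$---matches the paper's. The gap is in the ``core step.'' You aim to find a countable $I\in M$ with $q\in\mathrm{cl}_{(\sigma_I)_\delta}(Y\cap\sigma_I)$, but there is no reason this closure condition should hold: elements of $Y$ near $q$ may have supports not contained in $I$, so $Y\cap\sigma_I$ may be far too small. Your iterative construction does not repair this. At each stage you pick $y_n\in V\cap Y$, but since $V\notin M$ you cannot invoke elementarity to force $y_n\in M$ (your hedge ``whenever possible'' concedes this); and even if you could, a sequence of single witnesses for the fixed $V$ says nothing about \emph{every} neighborhood of $q$ meeting $Y\cap\sigma_{I^\ast}$, which is what the closure statement demands.

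The paper sidesteps both problems by replacing intersection with projection and by only absorbing the $M$-part of the support of $V$. Given $V$ with countable support $I$, use $\omega$-covering to get a countable $J\in M$ with $I\cap M\subseteq J$ (not $I\subseteq J$). Then $\pi_J[Y]\in M$ and automatically $\pi_J(q)\in\mathrm{cl}(\pi_J[Y])$; the tightness hypothesis plus elementarity yields $Z\in M$, $Z\subseteq\pi_J[Y]$, $|Z|\le\lambda$, with $\pi_J(q)\in\mathrm{cl}(Z)$, hence $Z\subseteq M$. Pick $z\in Z\cap\pi_J[V]$ and, by elementarity, $y\in Y\cap M$ with $\pi_J(y)=z$. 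For $\alpha\in I\cap M\subseteq J$ one has $y(\alpha)=z(\alpha)\in U_\alpha$; for $\alpha\in I\setminus M$ both $\mathrm{supp}(y)$ and $\mathrm{supp}(q)$ lie in $M$, so $y(\alpha)=x^\ast(\alpha)=q(\alpha)\in U_\alpha$. Thus $y\in V\cap Y\cap M$. The two missing ideas are: project rather than intersect, and handle $I\setminus M$ via the finite-support property rather than trying to absorb it into $M$.
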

\begin{proof}
% Note que $t(\sigma(\calL(\calA)^{\omega_2})_\delta)=\aleph_1$.
Let $Y$ be a non-closed subset of $\sigma_\delta=\sigma(X,x^\ast)_\delta$ and let $q\in\mathrm{cl}(Y)\setminus Y$.
By the theorem \ref{1}, there exists a $\omega$-covering elementary submodel $M$ of cardinality $\lambda$ such that $\{X,x^\ast,\kappa,\lambda,q,Y\}\cup\lambda\subseteq M$.
We are going to show that $q\in\mathrm{cl}(Y\cap M)$.
Suppose that 
\[
q\in U=\prod\{\,U_\alpha:\alpha\in I\,\}\times \prod\{\,X_\alpha:\alpha\in\kappa\setminus I\,\},
\]
where $I$ is a countable subset of $\kappa$ and each $U_\alpha$ is an open subset of $X_\alpha$\@.
Since $M$ is $\omega$-covering, there exists $J\in M$, a countable subset of $\kappa$ such that $I\cap M\subseteq J$.
Now note that $\pi_J(q)\in \mathrm{cl}(\pi_J[Y])$ and $\pi_J[Y]\in M$; besides $\pi_J[\sigma(X,x^\ast)_\delta]$ belongs to $M$ and, 
%and it is homeomorphic to $\sigma(X^\omega,x^\ast)_\delta$, 
by hypothesis, its tightness is $\leq\lambda$.
So by the elementarity there exists $Z\in M$, a subset of $\pi_J[Y]$ whose cardinality is at most $\lambda$, such that $\pi_J(q)\in\mathrm{cl}(Z)$.
Let $z\in \pi_J[U]\cap Z$.
Note that $z\in M$, because, since $Z\in M$ and $Z$ has cardinality at most $\lambda$ and $\lambda\subseteq M$, $Z\subseteq M$.
So by the elementarity there exists $y\in Y\cap M$ such that $\pi_J(y)=z$.
	
We claim that $y\in U$.
Indeed, since $\mathrm{supp}(y)$, $\mathrm{supp}(q)\subseteq M$, it follows that if $\alpha\in I\setminus M$ so $y(\alpha)=x^\ast(\alpha)=q(\alpha)\in U_\alpha$\@.
On the other side, if $\alpha\in I\cap M$ so $\alpha\in J$ and thus $y(\alpha)=z(\alpha)\in U_\alpha\cap M$\@.
Therefore, $y\in \prod\{\,U_\alpha:{\alpha\in I}\,\}\times\prod\{\,X_\alpha:\alpha\in\kappa\setminus I\,\}$\@.
% % Para cada $z\in Z$, escolha $y\in Y\cap M$ such that $y\upharpoonright T=z$.
\end{proof}

% 
% 
% Segue imediatamente dos \cref{160,161} que:
% 
\begin{theorem}
If $X=\prod\{\,X_\alpha:\alpha<\kappa\,\}$, with each $X_\alpha$ being a Lindel\"of $P$-space such that $t(X_\alpha)\leq\lambda$, with $\mathrm{Cov}_\omega(\lambda)=\lambda$, then 
% $\mathrm{Cov}_\omega(t(X))=t(X)$, then
% whose tightness is $\lambda$, with $\mathrm{Cov}_\omega(\lambda)=\lambda$, then
\[
t\left(\sigma\left(X\right)_\delta\right)\leq \lambda.
\]
In particular, if $X$ is a Lindel\"of $P$-space whose tightness is $\aleph_n$ then the tightness of $\sigma\left(X^\kappa, x^\ast\right)_\delta$ is $\aleph_n$.
\end{theorem}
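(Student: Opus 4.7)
The plan is to chain the two preceding lemmas: Lemma \ref{4} delivers the countable case, and Lemma \ref{161} lifts it to arbitrary product length. Fix any countable $I\subseteq\kappa$ and consider the $\sigma$-product $\sigma\{\,X_\alpha:\alpha\in I\,\}$. Each $X_\alpha$, being a regular Lindel\"of $P$-space, is in particular a regular locally Lindel\"of $P$-space, so the hypotheses of Lemma \ref{4} are met, and it yields
\[
t\bigl(\sigma\{\,X_\alpha:\alpha\in I\,\}_\delta\bigr)=\sup\{\,t(X_\alpha):\alpha\in I\,\}\leq\lambda.
\]
With the assumption $\mathrm{Cov}_\omega(\lambda)=\lambda$ in hand, Lemma \ref{161} now applies verbatim and gives $t(\sigma(X)_\delta)\leq\lambda$.

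For the ``in particular'' clause, invoke the main statement with every factor equal to $X$ and with $\lambda=\aleph_n$; the needed equality $\mathrm{Cov}_\omega(\aleph_n)=\aleph_n$ is a ZFC fact for finite $n$ (trivial for $n\leq 1$, and a consequence of Shelah's PCF bounds for larger $n$). This yields $t(\sigma(X^\kappa,x^\ast)_\delta)\leq\aleph_n$. For the matching lower bound, the ``axis'' subspace
\[
S=\{\,y\in\sigma(X^\kappa,x^\ast):y(\alpha)=x^\ast(\alpha)\text{ for all }\alpha\neq 0\,\}
\]
is a closed copy of $X$ inside $\sigma(X^\kappa,x^\ast)$. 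Since taking a subspace commutes with $G_\delta$-refinement and $X$ is already a $P$-space (so $X_\delta=X$), the subspace topology induced on $S$ from $\sigma(X^\kappa,x^\ast)_\delta$ coincides with the original topology of $X$; hence $t(\sigma(X^\kappa,x^\ast)_\delta)\geq t(S)=t(X)=\aleph_n$.

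Essentially no genuine obstacle remains, because the substantive content has been compartmentalised into Lemmas \ref{4} and \ref{161}. The only items to check are the trivial observation that Lindel\"of implies locally Lindel\"of, the cardinal-arithmetic fact $\mathrm{Cov}_\omega(\aleph_n)=\aleph_n$ needed to apply the theorem at $\lambda=\aleph_n$, and the routine verification that the diagonal-like subspace $S$ is closed and $P$-space-like so that its topology survives the $G_\delta$-refinement. If any step could sting, it would be this last cardinal computation at $\aleph_n$ for $n\geq 2$, but it is standard and does not require any additional set-theoretic hypothesis.
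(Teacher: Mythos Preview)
Your argument is correct and matches the paper's intended route: the theorem is stated immediately after Lemmas~\ref{4} and~\ref{161} with no written proof, precisely because it is meant to follow by chaining them exactly as you do. Your added lower-bound argument for the ``in particular'' clause (embedding $X$ as the axis subspace $S$) is a welcome detail the paper leaves implicit.

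One small correction: the identity $\mathrm{Cov}_\omega(\aleph_n)=\aleph_n$ for finite $n$ does not require Shelah's PCF machinery. It follows by a straightforward induction: for $n\geq 1$, every countable subset of $\aleph_n$ is bounded below some $\alpha<\aleph_n$ (regularity), and by the inductive hypothesis each $[\alpha]^{\aleph_0}$ has a cofinal family of size $\leq\aleph_{n-1}$; the union of these families over all $\alpha<\aleph_n$ has size $\aleph_n$ and is cofinal. The lower bound $\mathrm{Cov}_\omega(\aleph_n)\geq\aleph_n$ is immediate since any cofinal family must cover $\aleph_n$ by countable sets. PCF only enters for singular cardinals such as $\aleph_\omega$.
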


As a corollary of the previous theorem we have that, for a regular Lindel\"of $P$-space,
\[
t\left(\sigma\left(X^\kappa\right)_\delta\right)\leq t(X)^{\aleph_0}.
\]

It remains to be seen whether:

\begin{question}
Is there a Lindel\"of $P$-space $X$  such that $t(X)=\lambda$, with $\mathrm{Cov}_\omega(\lambda)>\lambda$, and
\[
t\left(\sigma\left(X^\kappa\right)_\delta\right)>\lambda?
\]
%In particular, if we assume that $\lambda=\aleph_\omega$ and $\mathrm{Cov}_\omega(\lambda)=\lambda^+$, so is 
\end{question}

\begin{question}
Assuming that $\mathrm{Cov}_\omega(\aleph_\omega)=\aleph_{\omega+1}$, is there a Lindel\"of $P$-space $X$  such that $t(X)=\aleph_\omega$ and
\[
t\left(\sigma\left(X^\kappa\right)_\delta\right)=\aleph_{\omega+1}?
\]
%In particular, if we assume that $\lambda=\aleph_\omega$ and $\mathrm{Cov}_\omega(\lambda)=\lambda^+$, so is 
\end{question}

Based on the theorem 3.1 from \cite{DW1986}, we have:

\begin{lemma}\label{6}
Let $\lambda$ be a infinite cardinal.
If $X=\sigma\{\,X_\alpha:\alpha<\lambda\,\}$ is a $\sigma$-product of regular Lindel\"of $P$-spaces, then 
\[
t\left(X_\delta\right)\leq \mathrm{Cov}_\omega(\lambda)\cdot \sup\{\,t(X_\alpha):\alpha<\lambda\,\}.
\]
\end{lemma}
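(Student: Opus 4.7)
The plan is to avoid invoking Lemma \ref{161} directly and instead construct the witnessing small set by hand, combining Lemma \ref{4} with a cofinal family in $[\lambda]^\omega$. Set $\tau = \sup\{t(X_\alpha) : \alpha < \lambda\}$ and $\mu = \mathrm{Cov}_\omega(\lambda) \cdot \tau$. Given a non-closed $Y \subseteq X_\delta$ and a point $q \in \mathrm{cl}_{X_\delta}(Y) \setminus Y$, the task is to produce $Z \subseteq Y$ with $|Z| \leq \mu$ and $q \in \mathrm{cl}_{X_\delta}(Z)$.

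Fix a cofinal family $\mathscr{I} \subseteq [\lambda]^\omega$ of cardinality $\mathrm{Cov}_\omega(\lambda)$. For each $I \in \mathscr{I}$, let $\pi_I$ be the projection from $\prod\{X_\alpha : \alpha < \lambda\}$ onto $\prod\{X_\alpha : \alpha \in I\}$, and let $\sigma_I = \sigma\{X_\alpha : \alpha \in I\}$ (with base point $\pi_I(x^\ast)$). The restriction $\pi_I : X \to \sigma_I$ remains continuous between the $G_\delta$-refinements (preimages of $G_\delta$-sets are $G_\delta$-sets), so $\pi_I(q) \in \mathrm{cl}_{(\sigma_I)_\delta}(\pi_I[Y])$. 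Applying Lemma \ref{4} to $\sigma_I$ --- a countable $\sigma$-product of regular Lindel\"of, hence locally Lindel\"of, $P$-spaces --- yields $t((\sigma_I)_\delta) \leq \tau$, so I may select $Z_I \subseteq \pi_I[Y]$ of cardinality at most $\tau$ with $\pi_I(q) \in \mathrm{cl}_{(\sigma_I)_\delta}(Z_I)$. For each $z \in Z_I$ pick a preimage $\tilde{z} \in Y$ under $\pi_I$, let $\tilde{Z}_I = \{\tilde{z} : z \in Z_I\}$, and set $Z = \bigcup\{\tilde{Z}_I : I \in \mathscr{I}\}$; then $|Z| \leq \mathrm{Cov}_\omega(\lambda) \cdot \tau = \mu$.

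To check $q \in \mathrm{cl}_{X_\delta}(Z)$, consider a basic $G_\delta$-neighborhood $U$ of $q$ in $X_\delta$. A $G_\delta$ in a product topology depends on only countably many coordinates (it is a countable intersection of basic opens, each involving finitely many coordinates), so $U = \pi_{I_0}^{-1}[V] \cap X$ for some countable $I_0 \subseteq \lambda$ and some $G_\delta$-set $V$ in $\prod\{X_\alpha : \alpha \in I_0\}$ with $\pi_{I_0}(q) \in V$. By cofinality of $\mathscr{I}$, pick $I \in \mathscr{I}$ with $I_0 \subseteq I$, and let $\pi^I_{I_0}$ denote the further projection. Then $(\pi^I_{I_0})^{-1}[V] \cap \sigma_I$ is a $G_\delta$-neighborhood of $\pi_I(q)$ in $(\sigma_I)_\delta$, so the closure relation for $Z_I$ produces some $z \in Z_I$ with $\pi^I_{I_0}(z) \in V$; its chosen lift $\tilde{z} \in \tilde{Z}_I \subseteq Z$ then satisfies $\pi_{I_0}(\tilde{z}) \in V$, hence $\tilde{z} \in U \cap Z$.

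The main subtlety worth flagging is the reason for not invoking Lemma \ref{161} with target cardinal $\mu$: doing so would require $\mathrm{Cov}_\omega(\mu) = \mu$, which can fail whenever $\mu$ is singular of countable cofinality (for instance $\mu = \aleph_\omega$). The construction above avoids that assumption by externally enumerating a cofinal family in $[\lambda]^\omega$ of the desired size and applying Lemma \ref{4} to each countable subproduct separately.
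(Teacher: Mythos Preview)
Your proof is correct and follows essentially the same route as the paper: fix a cofinal family in $[\lambda]^{\aleph_0}$, apply Lemma~\ref{4} to each countable projection to extract a small witness set, lift back, and take the union. The only cosmetic difference is that the paper phrases the argument via the $\kappa$-closed-set characterization of tightness rather than the direct one you use, but the underlying construction is identical.
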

\begin{proof}
Let $\kappa=\mathrm{Cov}_\omega(\lambda)\cdot \sup\{\,t(X_\alpha):\alpha<\lambda\,\}$.
For each $I\subseteq \lambda$, let $\sigma_I=\sigma\{\,X_\alpha:\alpha\in I\,\}_\delta$.
Suppose that $A\subseteq \sigma_\delta$ is $\kappa$-closed, and $a\in\mathrm{cl}_{\sigma_\delta}(A)$.

Note that, for each countable subset $J\subseteq \lambda$, $\pi_J[\mathrm{cl}_{\sigma(\lambda)}(A)]\subseteq \mathrm{cl}_{\sigma_J}(\pi_J[A])$.
Indeed, let $x\in \pi_J[\mathrm{cl}_{\sigma(\lambda)}(A)]$. 
Then there exists $z\in \mathrm{cl}_{\sigma(\lambda)}(A)$ such that $\pi_J(z)=x$. 
If $\prod\{\,U_j:j\in J\,\}$ is an basic neighborhood of $x$ in $\sigma_J$, then $\prod\{\,U_j:j\in J\,\}\times\prod\{\,X_\alpha:\alpha\in{\lambda\setminus J}\,\}$ is an open neighborhood of $z$. 
So $\left(\prod\{\,U_j:j\in J\,\}\times \prod\{\,X_\alpha:\alpha\in{\lambda\setminus J}\,\}\right)\cap A\ne \emptyset$ and, thus, $\left(\prod\{\,U_j:j\in J\,\}\right)\cap\pi_J[A]\ne \emptyset$.
Therefore, $x\in \mathrm{cl}_{\sigma_J}(\pi_J[A])$.

Then, for each countable subset $J\subseteq \lambda$, $\pi_J(a)\in \mathrm{cl}_{\sigma_J}(\pi_J[A])$.
By lemma \ref{4}, $t(\sigma_J)\leq\kappa$, then we can take $B_J\subseteq \pi_J[A]$ of cardinality $\leq \kappa$ such that $\pi_J(a)\in\mathrm{cl}_{\sigma_J}(B_J)$. 
For each $b\in B_J$ choose $x_b\in A$ such that $\pi_J(x_b)=b$, and let $C_J=\{\,x_b:b\in B_J\,\}$.

Now, let $\mathcal{J}$ be a cofinal family in $[\lambda]^{\aleph_0}$ and let
\[C=\bigcup\{\,C_J:J\in\mathcal{J}\,\}.\]
Note that $|C|\leq \mathrm{Cov}_\omega(\lambda)\cdot t(X)=\kappa$.
Then $\mathrm{cl}_{\sigma(\lambda)}(C)\subseteq A$.
So, it remains to be proved that $a\in \mathrm{cl}_{\sigma(\lambda)}(C)$.
Let $U=\prod\{\,U_j:{j\in J'}\,\}\times\prod\{\,X_\alpha:\alpha\in{\lambda\setminus J'}\,\}$ be an basic neighborhood of $a$ in $\sigma(\lambda)$.
Let $J\in\mathcal{J}$ such that $J'\subseteq J$.
Since $\pi_J(a)\in \mathrm{cl}_{\sigma_J}(B_J)=\mathrm{cl}_{\sigma_J}\left(\pi_J[C_J]\right)$, then $\pi_J[U]\cap\pi_J[C_J]\ne\emptyset$; so $U\cap C_J\ne\emptyset$.
Therefore, $U\cap C\ne\emptyset$.
\end{proof}

In the same way we have proved the lemma \ref{4}, we can show the following result for the cases in which $\mathrm{Cov}_\omega(t(X))>t(X)$:

\begin{lemma}\label{5}
% Seja $X=\calL(\calA)$.
If $X$ is a Lindel\"of $P$-space then 
\[
t(\sigma(X^{\aleph_\omega},x^\ast)_\delta)=t(X).
\] 
% has tightness $\aleph_1$.
\end{lemma}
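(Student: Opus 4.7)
The plan is to mimic the proof of Lemma~\ref{4} almost verbatim, exploiting that $\aleph_\omega=\sup_{n<\omega}\aleph_n$ has countable cofinality. The inequality $t(X)\leq t(\sigma(X^{\aleph_\omega},x^\ast)_\delta)$ is immediate: $x\mapsto(x,x^\ast,x^\ast,\ldots)$ is a closed topological embedding $X\hookrightarrow\sigma(X^{\aleph_\omega},x^\ast)$ that remains an embedding after passing to $G_\delta$-refinements, and since $X$ itself is a $P$-space we have $X=X_\delta$.

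For the reverse inequality, I would take $Y\subseteq\sigma_\delta:=\sigma(X^{\aleph_\omega},x^\ast)_\delta$ with $q\in\mathrm{cl}(Y)\setminus Y$ and set $Y_n=\{\,y\in Y:\mathrm{supp}(y)\subseteq\aleph_n\,\}$. Every $y\in\sigma$ has finite, hence bounded, support in $\aleph_\omega$, so $Y=\bigcup_{n\in\omega} Y_n$; since $\sigma_\delta$ is a $P$-space, countable unions of closed sets stay closed, whence $q\in\mathrm{cl}(Y_m)$ for some $m\in\omega$, and I may enlarge $m$ so that $\mathrm{supp}(q)\subseteq\aleph_m$ too. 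The natural inclusion $\sigma(X^{\aleph_m},x^\ast|_{\aleph_m})\hookrightarrow\sigma(X^{\aleph_\omega},x^\ast)$ extending coordinates by $x^\ast$ is a closed embedding whose image is closed, and it stays an embedding in the $G_\delta$-refinements, so $q\in\mathrm{cl}_{\sigma(X^{\aleph_m})_\delta}(Y_m)$.

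Next I would apply Lemma~\ref{6} to the $\sigma$-product of $\aleph_m$ copies of $X$, getting
\[
t\bigl(\sigma(X^{\aleph_m})_\delta\bigr)\leq \mathrm{Cov}_\omega(\aleph_m)\cdot t(X).
\]
A routine ZFC induction yields $\mathrm{Cov}_\omega(\aleph_m)=\aleph_m$ for every $m<\omega$ (given cofinal families in $[\alpha]^{\aleph_0}$ of size $\leq\aleph_k$ for each $\alpha<\aleph_{k+1}$, their union is a cofinal family in $[\aleph_{k+1}]^{\aleph_0}$ of size $\aleph_{k+1}$). Since this lemma targets the regime $\mathrm{Cov}_\omega(t(X))>t(X)$, and $\mathrm{Cov}_\omega$ fixes each $\aleph_n$ with $n<\omega$, we necessarily have $t(X)\geq\aleph_\omega$, so $\aleph_m<\aleph_\omega\leq t(X)$ and the right-hand side collapses to $t(X)$. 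Extracting $Z\subseteq Y_m\subseteq Y$ of cardinality $\leq t(X)$ with $q\in\mathrm{cl}(Z)$ then closes the proof.

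The main obstacle will be this last cardinal-arithmetic step: combining the $P$-space reduction with Lemma~\ref{6} requires both the ZFC identity $\mathrm{Cov}_\omega(\aleph_m)=\aleph_m$ for finite $m$ and the implicit hypothesis $t(X)\geq\aleph_\omega$ coming from the intended regime. In the complementary regime $\mathrm{Cov}_\omega(t(X))=t(X)$, the preceding theorem already provides the upper bound, so together the two cases yield the stated equality for every Lindel\"of $P$-space.
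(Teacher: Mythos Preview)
Your proposal is correct and follows essentially the same route as the paper: decompose $Y$ into the pieces $Y_n=\{y\in Y:\mathrm{supp}(y)\subseteq\aleph_n\}$, use the $P$-space property to reduce to some $Y_m$, and then invoke Lemma~\ref{6} on $\sigma(X^{\aleph_m})_\delta$ together with the ZFC identity $\mathrm{Cov}_\omega(\aleph_m)=\aleph_m$ and the observation $t(X)\geq\aleph_\omega$ in the regime $\mathrm{Cov}_\omega(t(X))>t(X)$. The only cosmetic difference is that the paper phrases the reduction via the projection $\pi_m\colon X^{\aleph_\omega}\to X^{\aleph_m}$ rather than via the closed embedding $\sigma(X^{\aleph_m})\hookrightarrow\sigma(X^{\aleph_\omega})$, and it leaves the lower bound, the case split, and the computation of $\mathrm{Cov}_\omega(\aleph_m)$ implicit where you spell them out.
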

\begin{proof}
Let $\kappa=t(X)$.
Suppose that $\mathrm{Cov}_\omega(\kappa)>\kappa$.
Note that $\kappa\geq\aleph_\omega$.
Let $Y$ be a non-closed subset of $\sigma(X^\omega,x^\ast)_\delta$ and let $q\in \mathrm{cl}(Y)\setminus Y$.
% % Seja $M$ um submodelo elementar $\omega$-covering de cardinality $\aleph_1$ such that $Y\cdots \in M$.
% % Suponha que 
% % \[
% % q\in V=\prod_{n\in\omega}V_n,
% % \]
% % onde cada $V_n=\{\infty\}\cup(\calA\setminus\calE_n)\cup(\omega_1\setminus(\bigcup\calE_n\cup E_n))$, com $\calE_n\subseteq\calA$ e $E_n\subseteq \omega_1$ enumeráveis. 
For each $n\in\omega$, let
\[
Y_n=\{\,y\in Y:\mathrm{supp}(y)\subseteq \omega_n\,\}.
\]
Since $Y=\bigcup\{\,Y_n:n\in\omega\,\}$ and $\sigma(X^\omega,x^\ast)_\delta$ is a $P$-space, there exists a $m\in\omega$ such that $q\in\mathrm{cl}(Y_m)$.
Now, $\pi_m(q)\in\mathrm{cl}\left(\pi_{m}[Y_m]\right)$, where $\pi_m$ is the natural projection from $X^{\aleph_\omega}$ in $X^{\aleph_m}$.
Since by the theorem \ref{6} the tightness of $X^{\aleph_m}$ is $\kappa$, there exists $Z'\subseteq \pi_{m}[Y_m]$ of cardinality $\leq\kappa$ such that $\pi_m(q)\in\mathrm{cl}(Z')$.
Then $Z=Z'\times\prod\{\,\{x^\ast(n)\}:n\geq m\,\}\subseteq Y_m\subseteq Y$ and since $\mathrm{supp}(q)\subseteq \omega_m$, we have $q\in\mathrm{cl}(Z)$.
\end{proof}

%Summarizing: $t(\sigma(X^\lambda,x^\ast)_\delta)=t(X)$ if:
%\begin{itemize}
%\item 
%\item
%\item
%\end{itemize}

%It is well known that, for a family of Hausdorff compact spaces $\{\,X_i:i\in I\,\}$,
%\[
%t\left(\prod\{\,X_i:i\in I\,\}\right)\leq |I|\cdot\sup\{\,t(X_i):i\in I\,\}.
%\]

%In \cite{DW1986} it was proved that if $\{\,X_i:i\in I\,\}$ is a family of locally Lindel\"of $T_3$ $P$-spaces, then 
%\[
%t\left(\prod\{\,X_i:i\in I\,\}\right)\leq |I|\cdot\sup\{\,t(X_i):i\in I\,\}.
%\]

%So it is natural to ask:

%\begin{question}
%If $\{\,X_i:i\in I\,\}$ is family of regular Lindel\"of $P$-spaces,
%\[
%t\left(\sigma\{\,X_i:i\in I\,\}_\delta\right)\leq |I|\cdot\sup\{\,t(X_i):i\in I\,\}?
%\]
%\end{question}

%\begin{question}
%
%\end{question}

\begin{bibdiv}
\begin{biblist}
\bib{JSSW}{article}{
title={Lindel\"of $P$-spaces},
% subtitle={},
author={I. Juh\'asz},
author={L. Soukup},
author={Z. Szentmikl\'ossy},
journal={Preprint}
}

\bib{Pas2007}{thesis}{
title={Extens\~{o}es de submodelos elementares por forcing},
author={M. D. Passos},
type={Ph.D. Thesis},
organization={Universidade de S\~ao Paulo},
date={2007}
}

\bib{LR1981}{article}{
title={Normal $P$-spaces and the $G_\delta$-topology},
author={R. Levy},
author={M. D. Rice},
journal={Colloquium Mathematicum},
volume = {44},
number = {2},
date={1981},
pages={227--240}
}

\bib{Mis1972}{article}{
author = {A. K. Misra},
journal = {Topology and its Applications},
pages = {349--362},
title = {{A topological view of $P$-spaces}},
volume = {2},
year = {1972}
}

\bib{HRW2007}{article}{
author = {M. Henriksen},
author = {R. Raphael},
author = {R. G. Woods},
issn = {0010-2628},
journal = {Commentationes Mathematicae Universitatis Carolinae},
number = {3},
pages = {487--505},
title = {SP-scattered spaces; a new generalization of scattered spaces},
volume = {48},
year = {2007}
}

\bib{HP1989}{article}{
author = {H. Z. Hdeib},
author = {C. M. Pareek},
issn = {0146-4124},
journal = {Topology Proceedings},
number = {},
pages = {59--74},
title = {A generalization of scattered spaces},
volume = {14},
year = {1989}
}

\bib{Tal1976}{article}{
author = {F. D. Tall},
journal = {Pacific Journal of Mathematics},
title = {The density topology},
volume = {62},
number = {1},
pages = {275--284},
year = {1976}
}

\bib{DW1986}{article}{
author = {U. N. B. Dissanayake},
author = {S. W. Willard},
journal = {Proceedings of the American Mathematical Society},
title = {Tightness in product spaces},
volume = {96},
number = {1},
pages = {136-140},
year = {1986}
}

\end{biblist}
\end{bibdiv}
\end{document}